\newtheorem{thm}{Theorem}[section]
\newtheorem{cor}[thm]{Corollary}
\newtheorem{lem}[thm]{Lemma}
\newtheorem{prop}[thm]{Proposition}
\theoremstyle{definition}
\newtheorem{rem}[thm]{Remark}
\newtheorem{ex}[thm]{Example}
\newtheorem{nota}[thm]{Notation}
\newtheorem*{claim*}{Claim}
\theoremstyle{remark}
\newtheorem*{ac}{Acknowlegments}
\numberwithin{equation}{thm}
\def\height{\operatorname{\mathsf{ht}}}
\def\Ext{\operatorname{\mathsf{Ext}}}
\def\syz{\mathsf{\Omega}}
\def\tr{\mathsf{Tr}}
\def\depth{\operatorname{\mathsf{depth}}}
\def\dim{\operatorname{\mathsf{dim}}}
\def\Hom{\operatorname{\mathsf{Hom}}}
\def\depth{\operatorname{\mathsf{depth}}}
\def\End{\operatorname{\mathsf{End}}}
\def\ocm{\mathrm{\Omega CM}}
\def\cm{\mathrm{CM}}
\def\Ref{\mathrm{Ref}}
\def\mod{\operatorname{mod}}
\def\add{\operatorname{add}}
\begin{document}
\allowdisplaybreaks
\setlength{\baselineskip}{14.95pt}
\title[Syzygies of CM modules over one dimensional CM local rings]{Syzygies of Cohen--Macaulay modules over one dimensional Cohen--Macaulay local rings}
\author{Toshinori Kobayashi}
\address{Graduate School of Mathematics, Nagoya University, Furocho, Chikusaku, Nagoya, Aichi 464-8602, Japan}
\email{m16021z@math.nagoya-u.ac.jp}
\thanks{2010 {\em Mathematics Subject Classification.} 13C14, 13D02, 13H10}
\thanks{{\em Key words and phrases.} Cohen--Macaulay ring/module, syzygy, almost Gorenstein ring}
\begin{abstract}
We study syzygies of (maximal) Cohen--Macaulay modules over one dimensional Cohen--Macaulay local rings. We compare these modules to Cohen--Macaulay modules over the endomorphism ring of the maximal ideal. After this comparison, we give several characterizations of  almost Gorenstein rings in terms of syzygies of Cohen--Macaulay modules.
\end{abstract}
\maketitle
%\tableofcontents
%%%%%%%%%%%%%%%%%%%%%%%%%%%%%%%%%%%%%%%%%%%%%%%%%%%%%%%%
\section{Introduction}

Throughout this paper, let $(R,\mathfrak{m})$ be a $1$-dimensional singular local Cohen--Macaulay ring which is generically Gorenstein, and set $E=\End_R(\mathfrak{m})$. All subcategories are assumed to be full. First of all, we give some notations.

\begin{nota}
Let $A$ be a commutative Noetherian ring. We denote:
\begin{enumerate}[(1)]\item
by $\mod (A)$ the category of finitely generated $A$-modules,
\item
by $\cm(A)$ the subcategory of Cohen--Macaulay $A$-modules,
\item
by $\cm'(A)$ the subcategory of Cohen--Macaulay $A$-modules without free summands,
\item
by $\ocm(A)$ the subcategory of first syzygies of Cohen--Macaulay $A$-modules, and
\item
by $\ocm'(A)$ the subcategory $\ocm(A)\cap \cm'(A)$.
\end{enumerate}
\end{nota}

Here we say that a finitely generated $A$-module $M$ is {\it Cohen--Macaulay} if the localization $M_\mathfrak{p}$ satisfies $\depth M_\mathfrak{p}\geq\height A_\mathfrak{p}$ for all prime ideals $\mathfrak{p}$ of $A$.

The category $\ocm(A)$ has been used to study surface singularities. For example, the relationship between $\ocm(A)$ and the category of special Cohen--Macaulay modules is established in \cite{IW}. Furthermore, the following theorem is shown in \cite[Corollary 3.3]{ncr}.

\begin{thm} [Dao--Iyama--Takahashi--Vial]
Let $A$ be an excellent henselian local normal domain of dimension two with algebraically closed residue field. Then $A$ has a rational singularity if and only if $\ocm(A)$ is of finite type.
\end{thm}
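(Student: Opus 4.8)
The plan is to translate the algebraic condition ``$\ocm(A)$ is of finite type'' and the geometric condition ``$A$ has a rational singularity'' into a common language, namely that of special Cohen--Macaulay modules and non-commutative resolutions. Since $A$ is a two-dimensional normal domain, every module in $\cm(A)$ is maximal Cohen--Macaulay and reflexive, and the first syzygy of such a module is again maximal Cohen--Macaulay. Using the relationship between $\ocm(A)$ and special Cohen--Macaulay modules established in \cite{IW}, I would first identify, up to free summands, the objects of $\ocm(A)$ with the special Cohen--Macaulay $A$-modules, that is, the maximal Cohen--Macaulay modules $M$ with $\Ext^1_A(M,A)=0$. This reduces the theorem to the assertion that $A$ has a rational singularity if and only if there are only finitely many indecomposable special Cohen--Macaulay modules.

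For the forward implication, assume $A$ has a rational singularity and let $\pi\colon\widetilde X\to\spec A$ be the minimal resolution, which exists by excellence. By Wunram's geometric McKay correspondence, the indecomposable special Cohen--Macaulay $A$-modules are in bijection with the irreducible components of the exceptional fibre of $\pi$, together with the free module $A$. As $\pi$ has only finitely many exceptional curves, there are finitely many indecomposable special modules, and hence, after the identification above, $\ocm(A)$ is of finite type.

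For the converse, assume $\ocm(A)$ is of finite type and let $M=A\oplus M_1\oplus\cdots\oplus M_n$ be the direct sum of a representative set of indecomposables of $\add(\ocm(A))$ together with $A$. Setting $\Lambda=\End_A(M)$, I would argue that $M$ is a non-commutative resolution of $A$, i.e.\ that $\Lambda$ has finite global dimension: the defining property of $\ocm(A)$ forces the syzygy of every maximal Cohen--Macaulay module to lie again in $\add M$, which allows one to construct finite $\add M$-resolutions and thereby bound $\operatorname{gl.dim}\Lambda$. One then invokes the characterization of rational surface singularities as precisely those two-dimensional normal domains admitting a non-commutative resolution, concluding that $A$ has a rational singularity.

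The main obstacle is the converse direction, and it has two substantial parts. First, passing from ``every maximal Cohen--Macaulay syzygy lies in $\add M$'' to an actual bound on $\operatorname{gl.dim}\Lambda$ is delicate, since one must control projective resolutions of \emph{all} finitely generated modules rather than the maximal Cohen--Macaulay ones alone, keeping track of depth as one descends. Second, and more seriously, the implication that the existence of a non-commutative resolution forces rationality of the singularity is genuinely deep; it rests on resolution of singularities together with the vanishing $R^i\pi_*\mathcal O_{\widetilde X}=0$ for $i>0$ defining rational singularities, and this is precisely where the hypotheses of excellence, henselianness, and an algebraically closed residue field become essential.
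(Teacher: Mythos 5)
This theorem is not proved in the paper at all: it is quoted verbatim from \cite[Corollary 3.3]{ncr} as motivation, so there is no internal argument to compare your proposal against. Judged on its own, your outline does follow the strategy by which the result is actually established in the literature: the identification (up to free summands) of $\ocm(A)$ with the special Cohen--Macaulay modules comes from \cite{IW}, the forward direction is Wunram's bijection between indecomposable non-free special modules and the exceptional curves of the minimal resolution, and the converse goes through non-commutative resolutions. In that sense the plan is the right one.

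The genuine gap is in the converse, and you have located it yourself without closing it. The statement you propose to ``invoke'' --- that a two-dimensional excellent henselian local normal domain admitting a non-commutative resolution must have a rational singularity --- \emph{is} the main theorem of \cite{ncr}; its proof is not a routine application of $R^i\pi_*\mathcal{O}_{\widetilde X}=0$ but runs through Grothendieck groups (roughly: an NCR forces the reduced Grothendieck group to be finitely generated, while a non-rational normal surface singularity has a divisor class group that is too large, e.g.\ containing an abelian variety for cones over curves of positive genus). Invoking it makes your argument a reduction of \cite[Corollary 3.3]{ncr} to the main theorem of the same paper, not an independent proof. Two smaller points also need care: the passage from ``$\ocm(A)=\add(M)$'' to ``$\End_A(A\oplus M)$ has finite global dimension'' is a genuine Auslander-type argument (one must resolve arbitrary finitely generated modules and track depth, exactly as you note), and Wunram's correspondence is only available \emph{after} rationality is assumed, so it can only be used in the forward direction --- which is how you use it, but worth stating explicitly since the identification of $\ocm(A)$ with the special modules in \cite{IW} is itself developed in the rational setting.
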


Here we say that a subcategory $\mathcal{X}$ of $\mod(A)$ is {\it of finite type} if there is a module $M\in\mod(A)$ with $\add(M)=\mathcal{X}$, where $\add (M)$ is the additive closure of $M$ in $\mod(A)$.

It is natural to ask what kind of one-dimensional local ring $R$ is such that $\ocm(R)$ is of finite type. To answer this question, we investigate the structure of $\ocm(R)$ to get the following result.

\begin{thm} \label{1}
The endomorphism ring $E$ is a commutative Cohen--Macaulay semilocal ring, and the following hold true.
\begin{enumerate}[\rm(1)]
\item
One has $\ocm(E)\subseteq\ocm'(R)\subseteq \cm(E)\subseteq\cm'(R)$ as subcategories of $\mod(R)$.
\item Assume that the residue field $R/\mathfrak{m}$ is infinite. Then
$\ocm(E)=\ocm'(R)$ if and only if $R$ has minimal multiplicity.
\item Assume that the completion $\widehat{R}$ of $R$ is generically Gorenstein. Then
$\cm(E)=\ocm'(R)$ if and only if $R$ is almost Gorenstein in the sense of \cite{GMP}.
\end{enumerate}
\end{thm}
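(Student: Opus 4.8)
The plan is to realize $E$ as a subring of the total quotient ring $Q$ of $R$ and then to translate every statement about $R$-modules into one about $E$-modules. Since $R$ is one-dimensional Cohen--Macaulay and generically Gorenstein, $\mathfrak{m}$ contains a nonzerodivisor $a$, and the standard identification $\End_R(\mathfrak{m})\cong(\mathfrak{m}:_Q\mathfrak{m})$ exhibits $E$ as a commutative $R$-subalgebra of $Q$ with $R\subseteq E\subseteq a^{-1}\mathfrak{m}$. In particular $E$ is module-finite over $R$ (hence Noetherian and semilocal) and torsion-free, so it is a maximal Cohen--Macaulay $R$-module and therefore a one-dimensional Cohen--Macaulay ring. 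I would also record the two facts that drive everything: $\mathfrak{m}E=\mathfrak{m}$ (because $R\subseteq E$ and $(\mathfrak{m}:\mathfrak{m})\mathfrak{m}\subseteq\mathfrak{m}$), and that $E/R$ has finite length (as $aE\subseteq\mathfrak{m}\subseteq R$).

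For part (1) the two outer inclusions are formal. Restriction of scalars along $R\hookrightarrow E$ preserves depth and dimension, since $E$ is finite over $R$ with all maximal ideals lying over $\mathfrak{m}$, giving $\cm(E)\subseteq\cm(R)$; and no $E$-module can have a free $R$-summand, because such a summand would produce an $R$-linear retraction $E\to R$, forcing the finite-length torsion-free module $E/R$ to be a maximal Cohen--Macaulay summand of $E$ and hence zero, contradicting $R\subsetneq E$ (here I use that $R$ is singular). The crucial middle inclusion $\ocm'(R)\subseteq\cm(E)$ rests on one lemma: if $M$ is maximal Cohen--Macaulay and $0\to\syz M\to R^{a}\to M\to0$ is a minimal presentation, then $\syz M$ is closed under multiplication by $E$ inside $Q^{a}$. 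Indeed $\syz M\subseteq\mathfrak{m}R^{a}$ by minimality, so for $x=b/a\in E$ (with $b\in\mathfrak{m}$) and $v\in\syz M$ one has $xv\in(x\mathfrak{m})R^{a}\subseteq\mathfrak{m}R^{a}\subseteq R^{a}$, while $a\cdot(xv)=bv$ maps to $0$ in $M$; as $a$ is a nonzerodivisor on the torsion-free module $M$, the image of $xv$ itself vanishes, so $xv\in\syz M$. Thus $\syz M$ is a torsion-free $E$-module of the right dimension, so it lies in $\cm(E)$. For the last inclusion $\ocm(E)\subseteq\ocm'(R)$ I would first verify $E\in\ocm(R)$, i.e. fix an exact sequence $0\to E\to R^{n}\to C\to0$ with $C$ Cohen--Macaulay, and then splice it (via a horseshoe argument) with a minimal surjection $E^{a}\twoheadrightarrow N$ for $N\in\cm(E)$, realizing the $E$-syzygy $\syz_{E}N$ as an $R$-first-syzygy of a Cohen--Macaulay extension of $N$ by $C$.

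For part (2) I would invoke the classical fact that, over an infinite residue field, minimal multiplicity is equivalent to $\mathfrak{m}^{2}=a\mathfrak{m}$, which in turn forces $\mathfrak{m}M=aM$ for every maximal Cohen--Macaulay $M$ without free summands; equivalently every such $M$ is already an $E$-module, so that $\cm'(R)=\cm(E)$. Feeding this equality into the chain of part (1) collapses it, and since both syzygy operations then act on the common category $\cm'(R)=\cm(E)$ and agree there, I obtain $\ocm(E)=\ocm'(R)$. For the converse I would argue contrapositively: if $R$ lacks minimal multiplicity then $\cm(E)\subsetneq\cm'(R)$, and a length/multiplicity count exhibits an $R$-first-syzygy lying in $\ocm'(R)$ that is not an $E$-first-syzygy, witnessing $\ocm(E)\subsetneq\ocm'(R)$.

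The heart of the theorem, and the step I expect to be the main obstacle, is part (3). Since part (1) already gives $\ocm'(R)\subseteq\cm(E)$ unconditionally, the content is the reverse inclusion $\cm(E)\subseteq\ocm'(R)$, namely that \emph{every} maximal Cohen--Macaulay $E$-module, viewed over $R$, is an $R$-first-syzygy of a Cohen--Macaulay module. I would reformulate "being an $R$-first-syzygy of a Cohen--Macaulay module" homologically (through reflexivity, i.e. via $\Ref(R)$, and the vanishing of a suitable $\Ext$ against the transpose $\tr$), so that the desired inclusion becomes a single condition on the canonical module. The hypothesis that $\widehat{R}$ is generically Gorenstein guarantees that $K_R$ exists and can be taken to be a canonical ideal containing a nonzerodivisor, and I would show that the almost Gorenstein condition of \cite{GMP}---an exact sequence $0\to R\to K_R\to C\to0$ with $C$ an Ulrich module---translates precisely into the statement that $K_R$ (equivalently $E$ and its canonical module) sits in $\ocm'(R)$, and that this single test case already controls all of $\cm(E)$. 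Building this dictionary in both directions, and in particular tracking the Ulrich/minimal-multiplicity behaviour of $C$ through $E=\End_R(\mathfrak{m})$, is the delicate part; passing to the completion is what legitimizes the use of existence and uniqueness of $K_R$ and reduces matters to the structure theory of almost Gorenstein rings.
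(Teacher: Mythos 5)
Your part (1) is essentially correct and follows the same route as the paper: the direct computation showing that a minimal syzygy $\syz M\subseteq\mathfrak{m}R^{a}$ is stable under $E=\mathfrak{m}:_{Q}\mathfrak{m}$ is exactly the proof of Bass's lemma that the paper quotes for $\ocm'(R)\subseteq\cm(E)$, and your splice argument for $\ocm(E)\subseteq\ocm'(R)$ is a workable variant of the paper's pushout lemma. Note, however, that you assert rather than prove the input $E\in\ocm(R)$; the paper derives it by first showing $\mathfrak{m}=\syz k\in\ocm(R)$, hence $\mathfrak{m}$ is reflexive, hence $E=\Hom_R(\mathfrak{m},\mathfrak{m})=\Hom_R(\mathfrak{m},R)=\syz^2\tr\mathfrak{m}$.

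The genuine gaps are in parts (2) and (3). In (2) your forward direction rests on the claim that $\mathfrak{m}^{2}=a\mathfrak{m}$ forces $\mathfrak{m}M=aM$ for every $M\in\cm'(R)$, i.e.\ that $\cm'(R)=\cm(E)$. This is false: for $R=k[[t^{4},t^{5},t^{6},t^{7}]]$ (minimal multiplicity, $E=\overline{R}=k[[t]]$) the fractional ideal $M=R+Rt$ is Cohen--Macaulay, indecomposable and nonfree, yet $t\cdot t=t^{2}\notin M$, so $M$ is not an $E$-module and $\mathfrak{m}M\neq t^4M$. What is true is that modules in $\ocm'(R)$ are $E$-modules (that is part (1)) and that minimal multiplicity gives $\mathfrak{m}\cong E$ (Ooishi); the paper then concludes by embedding every $M\in\ocm'(R)$ into $\mathfrak{m}^{\oplus n}\cong E^{\oplus n}$ with cokernel in $\cm(E)$. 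Moreover, even granting $\cm'(R)=\cm(E)$, the chain of (1) would not ``collapse'': equality of its last two terms says nothing about its first two, and the $R$- and $E$-syzygy operations do not obviously agree. Your converse direction (``a length/multiplicity count'') is not an argument; the paper instead observes that $\ocm(E)=\ocm'(R)$ puts $\mathfrak{m}$ in $\Ref(E)$ with $\End_E(\mathfrak{m})=E$, and a Brennan--Vasconcelos-type lemma then forces $\mathfrak{m}\cong E$. Part (3) is only a plan: you correctly guess that everything should reduce to whether $E^{\dag}$ (equivalently $K:E$) lies in $\ocm'(R)$, but the proof requires (i) the duality argument showing that $E^{\dag}\in\ocm'(R)$ alone already yields $\cm(E)=\ocm'(R)$, (ii) the Krull--Schmidt/localization argument showing that $E^{\dag}\in\ocm'(R)$ forces $E^{\dag}\cong\mathfrak{m}$, and (iii) the colon-ideal identities $K:\mathfrak{m}=E$ and $\mathfrak{m}:K=\mathfrak{m}$ from Goto--Matsuoka--Phuong that tie this to almost Gorensteinness. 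None of these steps is carried out in your proposal, so the main assertion of the theorem remains unproved.
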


The purpose of this paper is to give a proof of this theorem. As an application, we construct some examples of rings $R$ such that $\ocm(R)$ is of finite type. We also deduce the following results.

\begin{cor}
Assume that $R$ is almost Gorenstein. Then $\ocm(R)$ is of finite type if and only if $\cm(E)$ is of finite type.
\end{cor}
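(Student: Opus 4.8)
The plan is to deduce the corollary from Theorem~\ref{1} together with a purely formal comparison between $\ocm(R)$ and $\ocm'(R)$. Since $R$ is almost Gorenstein, Theorem~\ref{1}(3) is applicable (we take the hypothesis that $\widehat{R}$ is generically Gorenstein to be in force, as is customary for almost Gorenstein rings in the sense of \cite{GMP}), and it gives the identification $\cm(E)=\ocm'(R)$ as subcategories of $\mod(R)$. It therefore suffices to prove that $\ocm(R)$ is of finite type if and only if $\ocm'(R)$ is of finite type.

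The decisive step is to show that $\ocm(R)$ is closed under direct summands. Suppose $X\oplus Y\cong\Omega M$ with $M\in\cm(R)$, and fix an exact sequence $0\to X\oplus Y\to F\to M\to 0$ with $F$ free. Since $\depth F=1$ (as $\dim R=1$) and $\depth M\geq 1$, the depth lemma yields $\depth(X\oplus Y)\geq 1$, so $X$ and $Y$ belong to $\cm(R)$; the remaining local conditions in the definition of Cohen--Macaulayness pass to direct summands as well. Now quotient by the summand $X$: identifying $(X\oplus Y)/X\cong Y$ gives the exact sequence
\[
0\to Y\to F/X\to M\to 0,
\]
and one more application of the depth lemma shows $\depth(F/X)\geq\min(\depth Y,\depth M)\geq 1$, whence $F/X\in\cm(R)$. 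Because $X=\Ker(F\twoheadrightarrow F/X)$, we obtain $X\in\ocm(R)$, as desired.

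With this closure property in hand, the equivalence of finite type is bookkeeping with free summands. Recall that $\ocm'(R)=\ocm(R)\cap\cm'(R)$ is precisely the class of objects of $\ocm(R)$ having no nonzero free direct summand. If $\ocm(R)=\add(W)$ and we write $W=W'\oplus R^{b}$ with $W'$ free-summand-free, then closure under summands forces $W'\in\ocm'(R)$, and a direct check gives $\ocm'(R)=\add(W')$, so $\ocm'(R)$ is of finite type. Conversely, if $\ocm'(R)=\add(N)$, then any $X\in\ocm(R)$ decomposes as $X=X'\oplus R^{a}$ with $X'\in\ocm(R)$ (by closure) and hence $X'\in\ocm'(R)=\add(N)$; thus either $\ocm(R)=\ocm'(R)$, or $R\in\ocm(R)$ and $\ocm(R)=\add(N\oplus R)$. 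In either case $\ocm(R)$ is of finite type, which completes the argument.

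The step I expect to be the main obstacle is the closure of $\ocm(R)$ under direct summands. Although the depth estimates themselves are routine, two points need care: first, one must confirm that the full Cohen--Macaulay condition (the inequality $\depth M_{\mathfrak{p}}\geq\height R_{\mathfrak{p}}$ at every prime, not merely at $\mathfrak{m}$) is inherited by $F/X$; and second, one must reconcile the construction with the precise meaning of ``first syzygy'' in the definition of $\ocm(R)$, since $X=\Ker(F\twoheadrightarrow F/X)$ arises from a free cover that may fail to be minimal and could in principle differ from the minimal syzygy of $F/X$ by a free summand. Handling this free-summand ambiguity together with the two cases $R\in\ocm(R)$ and $R\notin\ocm(R)$ cleanly is what makes the reduction go through without imposing extra hypotheses.
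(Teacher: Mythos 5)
Your proof is correct and follows the route the paper intends: Theorem \ref{1}(3) identifies $\cm(E)$ with $\ocm'(R)$ for an almost Gorenstein ring, and the rest is the observation that $\ocm(R)$ and $\ocm'(R)$ differ only by free direct summands. The one step you single out as delicate --- closure of $\ocm(R)$ under direct summands, together with the ambiguity about minimal versus arbitrary free covers --- is actually immediate from Lemma \ref{03}, since $\ocm(R)=\mathcal{F}_2(R)=\Ref(R)$ is closed under direct summands and contains $R$, so your hands-on depth-lemma argument can be replaced by that citation.
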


We remark that there is an example of a ring $R$ which is not almost Gorenstein such that $\ocm(R)$ is of finite type (see Example \ref{ex2}). 
As another application, we also obtain a result on the dimension of a triangulated category, which has been introduced by Rouquier \cite{R}. It turns out that each integer is realized as the dimension of the stable category of Cohen--Macaulay modules.

\begin{cor}
Let $n>0$ be any integer. Let $R=k[[H]]$ be the numerical semigroup ring with $H=\langle 2^{n+1}, \{2^{n+1}+2^i\}_{i=1,\dots,n},2^{n+1}+3\rangle$. Then $R$ is a complete intersection with
\[
\dim \underline{\cm}(R)=n,
\]
where $\underline{\cm}(R)$ stands for the stable category of $\cm(R)$.
\end{cor}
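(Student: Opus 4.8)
The plan is to prove the two assertions separately: that $R=k[[H]]$ is a complete intersection, and that $\dim\underline{\cm}(R)=n$, the latter by sandwiching it between a lower bound and an upper bound that come from completely different sources. For the complete intersection property I would invoke Delorme's criterion, that a numerical semigroup ring is a complete intersection exactly when the semigroup is built from $\mathbb N$ by a sequence of gluings of complete intersection semigroups. The point is that $H$ has a clean recursive gluing structure with multiplier $2$. Separating the even generators $2^{n+1},2^{n+1}+2,\dots,2^{n+1}+2^{n}$ from the odd generator $2^{n+1}+3$, I would check that $H=H_n$ is the gluing of the semigroup $G_n$ obtained by halving the even generators, namely $G_n=\langle 2^{n},2^{n}+1,2^{n}+2,2^{n}+4,\dots,2^{n}+2^{n-1}\rangle$, with $\mathbb N$ along $\alpha=2$ and $\beta=2^{n+1}+3$; one verifies $\beta\in G_n$ (for instance $2^{n+1}+3=(2^{n}+1)+(2^{n}+2)$) and $\gcd(2,\beta)=1$. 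The same halving applied to $G_n$ produces $G_{n-1}$, so iterating yields a tower $\langle 2,3\rangle=G_1\leadsto G_2\leadsto\cdots\leadsto G_n\leadsto H_n$ of gluings with multiplier $2$, based at the hypersurface $\langle 2,3\rangle$. By Delorme this makes $H_n$ a complete intersection, and counting generators gives $\edim R=n+2$, hence $\codim R=n+1$.

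For the lower bound I would use support variety theory. Since $R$ is a one-dimensional complete intersection of codimension $c=n+1$ that is a domain with regular normalization, it has an isolated singularity, so $\Ext^{*}_R(k,k)$ is a finitely generated module over the polynomial ring $k[\chi_1,\dots,\chi_{n+1}]$ of cohomology operators, and the projective support variety of $k$ is all of $\mathbb P^{n}$. Feeding this into the ghost-lemma lower bounds for Rouquier dimension (in the spirit of Rouquier's computation for $\mathbb P^{n}$ and the complete intersection bounds of Bergh--Oppermann), one should produce a chain of $n$ nonzero ghost compositions and conclude $\dim\underline{\cm}(R)\ge c-1=n$.

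The upper bound is where the real work lies, and I would extract it from the gluing tower rather than from the codimension, which cannot by itself bound $\dim\underline{\cm}$ from above, as non-simple plane curves show. Each gluing step with multiplier $2$ realizes $R$ as a free rank-two extension $B=A[w]/(w^{2}-g)$ of the second Veronese subring $A\cong k[[G_n]]$, where $w=t^{\,2^{n+1}+3}$ and $g\in\mathfrak m_A$ is a nonzerodivisor; indeed $H_n=2G_n\sqcup(\beta+2G_n)$, which is exactly this relative hypersurface over the singular base $A$. The plan is to prove a lemma that such an extension satisfies $\dim\underline{\cm}(B)\le\dim\underline{\cm}(A)+1$: one builds every Cohen--Macaulay $B$-module from its restriction to $A$ using the short exact sequences attached to multiplication by $w$ together with one extra mapping cone, and then iterates down the tower to the base $\langle 2,3\rangle$, which has finite Cohen--Macaulay type and hence $\dim\underline{\cm}=0$. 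Since $H_n$ sits $n$ gluing steps above $\langle 2,3\rangle$, this gives $\dim\underline{\cm}(R)\le n$, and combined with the lower bound the corollary follows.

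Here the hypotheses of the paper can be brought to bear on the per-step estimate: $R$ is Gorenstein, so Theorem \ref{1}(3) gives $\cm(E)=\ocm'(R)$, which says precisely that every Cohen--Macaulay $R$-module becomes, after one syzygy, a Cohen--Macaulay module over the endomorphism ring $E=\End_R(\mathfrak m)$; this is the mechanism that should let one trade a step of the tower for a single mapping cone. The main obstacle I anticipate is exactly this estimate $\dim\underline{\cm}(B)\le\dim\underline{\cm}(A)+1$: one must control how the indecomposable Cohen--Macaulay $B$-modules decompose after restriction to $A$ and verify that the $w$-action reassembles them using only one additional cone, uniformly along the tower. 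Getting this relative-hypersurface inequality to hold over a singular base $A$ — rather than the regular base of ordinary Knörrer periodicity — is the crux, and is where I expect the syzygy comparison of Theorem \ref{1} to do the essential work.
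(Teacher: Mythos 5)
Your complete-intersection argument via Delorme's gluing criterion and your lower bound via cohomology operators are both sound and essentially match the paper, which simply asserts the complete intersection property and cites \cite[Corollary 5.10]{BIKO} for $\dim\underline{\cm}(R)\geq \codim R-1=n$. The problem is the upper bound, where your proposal has a genuine gap. The entire argument rests on the lemma that a relative hypersurface $B=A[w]/(w^2-g)$ over a \emph{singular} Gorenstein base $A$ satisfies $\dim\underline{\cm}(B)\le\dim\underline{\cm}(A)+1$, and this is exactly the step you cannot supply. The natural exact sequence $0\to M^{-}\to M|_A\otimes_AB\to M\to 0$ (with $M^{-}$ the twist of $M$ by $w\mapsto -w$) only shows that $M$ is an extension of $M$-dependent data by an extended module; since $M^{-}$ is again an arbitrary Cohen--Macaulay $B$-module, iterating gives $M\cong\Sigma^2M$ in the Verdier quotient by the extended modules rather than any finite generation statement. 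Over a regular base this regress is broken because $M|_A\otimes_AB$ is free, which is precisely Kn\"orrer periodicity; over a singular base no such collapse occurs, and I know of no result of this shape. You correctly identify this as the crux, but flagging the crux is not proving it.

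Moreover, the mechanism you propose for closing the gap does not apply: Theorem \ref{1} concerns the endomorphism ring $E=\End_R(\mathfrak{m})$ of the maximal ideal, whereas the ring $A\cong k[[G_n]]$ in your gluing tower is a Veronese-type subring of $R$, not an overring, and is not $\End_R(\mathfrak{m})$. These are two different towers, and the syzygy comparison $\cm(E)=\ocm'(R)$ says nothing about your extension $A\subseteq B$. The paper's actual route is the opposite of yours: it iterates the endomorphism-ring construction $R_1=E$, $R_{i}=\End_{R_{i-1}}(\mathfrak{m}_{i-1})$, proves (Proposition \ref{dim}) that if $\cm(R_m)$ is of finite type then $\dim\underline{\cm}(R)\le m-1$ --- here each step costs one cone via the exact sequences $0\to M_i\to R_{i-1}^{\oplus}\to M_{i-1}\to 0$ coming from Theorem \ref{1}(1) and Lemma \ref{3} --- and then computes explicitly that $R_{n+1}\cong k[[t^2,t^3]]$, which has finite Cohen--Macaulay type. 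To repair your proof you would either have to prove your relative-hypersurface inequality from scratch, or abandon the gluing tower for the upper bound and instead identify the successive endomorphism rings $R_i$ of this particular semigroup ring.
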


\section{The endomorphism ring of the maximal ideal}

In this section, we give a proof of Theorem \ref{1}. 
Let $A$ be a commutative Noetherian ring. Recall the definition of reflexive modules. A finitely generated $A$-module $M$ is called {\it reflexive} if the natural map $M\to \Hom_A(\Hom_A(M,A),A)$ is an isomorphism. We denote by $\Ref(A)$ the subcategory of reflexive $A$-modules. 
For any $M\in\mod (A)$, we denote by $\tr M$ the Auslander transpose of $M$.
For an integer $n\geq 1$, we define $\mathcal{F}_n(A)=\{M\mid \Ext_R^i(\tr M,R)=0 \text{ for }i=1,\dots,n\}$ as a full subcategory of $\mod(A)$. A module $M\in \mod(A)$ is called {\it $n$-torsionfree} if it is in $\mathcal{F}_n$.

We have the following characterization of modules in $\cm(A)$, $\ocm(A)$, or $\Ref(A)$. 

\begin{lem} \label{01}
\begin{enumerate}[\rm(1)]
\item
One has $\Ref(A)=\mathcal{F}_2(A)$.
\item
Assume that $A$ is a Cohen--Macaulay ring of dimension $d$ such that $A_\mathfrak{p}$ is Gorenstein for every prime ideal $\mathfrak{p}$ of $A$ with $\height \mathfrak{p}$ being at most $d-1$. Then $\mathcal{F}_d(A)=\cm(A)$ and $\mathcal{F}_{d+1}(A)=\ocm(A)$.
\end{enumerate}
\end{lem}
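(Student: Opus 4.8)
The plan is to run everything through the Auslander--Bridger transpose calculus. For part (1), recall that dualizing a finite projective presentation of $M$ and splicing produces a natural exact sequence
\[
0\to\Ext^1_A(\tr M,A)\to M\xrightarrow{\ \sigma_M\ }\Hom_A(\Hom_A(M,A),A)\to\Ext^2_A(\tr M,A)\to0,
\]
in which $\sigma_M$ is the biduality map. Thus $M$ is reflexive exactly when $\sigma_M$ is an isomorphism, which happens exactly when the two outer terms vanish, i.e. when $M\in\mathcal F_2(A)$. This is the assertion $\Ref(A)=\mathcal F_2(A)$, and it is where part (2) begins, since $\mathcal F_n(A)\subseteq\mathcal F_2(A)$ for all $n\ge2$.

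For the identity $\mathcal F_d(A)=\cm(A)$ I would invoke the Auslander--Bridger dictionary between torsionfreeness and depth: when $A_\p$ is Gorenstein for every $\p$ with $\depth A_\p<n$, a module $M$ lies in $\mathcal F_n(A)$ if and only if $\depth M_\p\ge\min\{n,\depth A_\p\}$ for all $\p\in\spec A$. Because $A$ is Cohen--Macaulay, $\depth A_\p=\height\p$, so the hypothesis ``$A_\p$ Gorenstein for $\height\p\le d-1$'' is precisely what is needed to apply this at level $n=d$. Then $\min\{d,\height\p\}=\height\p$, so membership in $\mathcal F_d(A)$ reads $\depth M_\p\ge\height\p$ for all $\p$, which is the definition of $\cm(A)$. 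Note that this route is unavailable at level $d+1$: it would demand $A_\p$ Gorenstein also at the height-$d$ primes, which we do not assume.

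To reach $\mathcal F_{d+1}(A)=\ocm(A)$ without touching the top-dimensional primes, I would pass to syzygies directly. Writing $M=\syz C$ via a free cover $0\to M\to F\to C\to0$ and dualizing the attached free resolution of $C$, one obtains the key exact sequence
\[
0\to\Ext^1_A(C,A)\to\tr C\to\syz\tr M\to0 ,
\]
and since $\Ext^i_A(\syz\tr M,A)\cong\Ext^{i+1}_A(\tr M,A)$ for $i\ge1$, this reduces the comparison of $\mathcal F_{d+1}(M)$ and $\mathcal F_d(C)$ to the behaviour of the torsion term $K:=\Ext^1_A(C,A)$. For the inclusion $\mathcal F_{d+1}(A)\subseteq\ocm(A)$, a module $M\in\mathcal F_{d+1}(A)$ is reflexive, so one can choose the embedding to be the dual of a free cover of $M^\ast$, forcing $K=0$; the sequence then gives a clean shift and produces $C\in\mathcal F_d(A)=\cm(A)$ with $M=\syz C$, i.e. $M\in\ocm(A)$.

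The reverse inclusion $\ocm(A)\subseteq\mathcal F_{d+1}(A)$ is the main obstacle, and it is exactly here that both standing hypotheses are consumed. Given $C\in\cm(A)$ and $M=\syz C$, the long exact sequence of the displayed short exact sequence shows that $M\in\mathcal F_{d+1}(A)$ provided $\Ext^i_A(K,A)=0$ for $0\le i\le d-1$. Now $K=\Ext^1_A(C,A)$ localizes to $\Ext^1_{A_\p}(C_\p,A_\p)$, which vanishes at every $\p$ of height at most $d-1$ because there $A_\p$ is Gorenstein and $C_\p$ is maximal Cohen--Macaulay; hence $K$ is supported only at the maximal ideals and has finite length. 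Since $A$ has depth $d$ at each maximal ideal, a finite-length module satisfies $\Ext^i_A(K,A)=0$ for all $i<d$, which is precisely the range required. I expect this finite-length vanishing to be the delicate point: it is what carries the argument past the possibly non-Gorenstein height-$d$ primes, and verifying it carefully (rather than any depth dictionary at level $d+1$) is the crux of the whole statement.
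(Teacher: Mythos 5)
Your proposal is correct and follows essentially the same route as the paper, which proves this lemma purely by citation: part (1) is \cite[Proposition 12.5]{LW} (the Auslander--Bridger four-term exact sequence you display), $\mathcal{F}_d(A)=\cm(A)$ is \cite[Theorems 3.6 and 3.8]{EG} (the torsionfree/$\depth$ dictionary you invoke), and $\mathcal{F}_{d+1}(A)=\ocm(A)$ is extracted from the proof of \cite[Proposition 2.4]{IW}, whose argument is exactly your pushforward construction together with the vanishing of $\Ext^i_A(K,A)$ for $i<d$ for the finite-length module $K=\Ext^1_A(C,A)$. In effect you have supplied the details that the paper outsources to the literature, and they check out.
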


\begin{proof}
(1)
See \cite[Proposition 12.5]{LW} for instance.

(2)
See \cite[Theorems 3.6 and 3.8]{EG} to prove the equality $\mathcal{F}_d(A)=\cm(A)$ holds. The proof of \cite[Proposition 2.4]{IW} shows that $\mathcal{F}_{d+1}(A)=\ocm(A)$ holds.
\end{proof}

 We denote by $Q(R)$ the total quotient ring of $R$ and by $\overline{R}$ the integral closure of $R$ in $Q(R)$. Then we can identify the ring $E(=\End_R(\mathfrak{m}))$ with $\mathfrak{m}:_{Q(R)}\mathfrak{m}$ in the natural way. Note that $E$ is a one-dimensional Cohen--Macaulay semilocal ring contained in $\overline{R}$ because $E$ is a Cohen--Macaulay $R$-module. Using the propositions in previous section, we have the following identifications.

\begin{lem} \label{03}
$\ocm(R)=\mathcal{F}_2(R)=\Ref(R)$ and $\ocm(E)=\mathcal{F}_2(E)=\Ref(E)$.
\end{lem}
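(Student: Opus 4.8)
The plan is to derive both chains of equalities as direct consequences of Lemma \ref{01}, once I verify that its hypotheses apply to $R$ and to $E$. Concretely, for any $1$-dimensional Cohen--Macaulay ring $A$ that is Gorenstein in codimension $0$, Lemma \ref{01}(2) with $d=1$ gives $\mathcal{F}_2(A)=\mathcal{F}_{d+1}(A)=\ocm(A)$, while Lemma \ref{01}(1) gives $\Ref(A)=\mathcal{F}_2(A)$; combining the two yields $\ocm(A)=\mathcal{F}_2(A)=\Ref(A)$. Thus the whole lemma reduces to checking that both $R$ and $E$ are $1$-dimensional Cohen--Macaulay and generically Gorenstein.

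For $R$ there is nothing to do: by the standing assumption $R$ is $1$-dimensional Cohen--Macaulay and generically Gorenstein, i.e. $R_{\mathfrak p}$ is Gorenstein for every minimal prime $\mathfrak p$ (the only primes of height $\le d-1=0$). Applying Lemma \ref{01} as above yields $\ocm(R)=\mathcal{F}_2(R)=\Ref(R)$.

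For $E$ the point is to establish generic Gorensteinness; the remaining hypotheses (that $E$ is $1$-dimensional Cohen--Macaulay) were recorded just before the statement. Here I would use that $E=\mathfrak{m}:_{Q(R)}\mathfrak{m}$ lies between $R$ and $\overline{R}$ inside $Q(R)$ and is a finite, torsion-free, hence integral, $R$-module. Letting $W$ be the set of nonzerodivisors of $R$, inverting $W$ gives $W^{-1}E=Q(R)$, since $Q(R)=W^{-1}R\subseteq W^{-1}E\subseteq Q(R)$. Now each minimal prime $\mathfrak q$ of $E$ contracts, by integrality, to a minimal prime of $R$, which is disjoint from $W$; as $W\subseteq R$ this forces $\mathfrak q\cap W=\emptyset$, so $\mathfrak q$ survives in $W^{-1}E=Q(R)$ and $E_{\mathfrak q}$ is a localization of $Q(R)$. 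Because $R$ is generically Gorenstein, $Q(R)$ is a finite product of Artinian Gorenstein local rings, and every localization of $Q(R)$ is therefore Gorenstein. Hence $E_{\mathfrak q}$ is Gorenstein for each minimal prime $\mathfrak q$, i.e. $E$ is Gorenstein in codimension $0$, and Lemma \ref{01} gives $\ocm(E)=\mathcal{F}_2(E)=\Ref(E)$ exactly as for $R$.

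The only genuinely nontrivial step is the generic Gorensteinness of $E$, that is, the identification of its minimal-prime localizations with localizations of $Q(R)$ via $W^{-1}E=Q(R)$; everything else is a formal invocation of Lemma \ref{01}. I expect this identification to be the crux of the argument.
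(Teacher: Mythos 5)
Your proposal follows the paper's proof exactly: the paper likewise reduces everything to Lemma \ref{01}, noting that $R$ satisfies its hypotheses by assumption and that $E$ does too ``because the total quotient ring of $E$ coincides with $Q(R)$''; your argument simply fills in the details of that one assertion. One small caution: you justify the claim that a minimal prime $\mathfrak q$ of $E$ contracts to a minimal prime of $R$ ``by integrality,'' but finite (hence integral) extensions need not satisfy going-down, and minimal primes of a finite extension ring can contract to non-minimal primes in general, so integrality alone does not give this. The correct reason---available from the torsion-freeness you already invoke---is that every element of a minimal prime of $E$ is a zerodivisor on $E$, whereas every element of $W$ acts injectively on $E\subseteq Q(R)$; this yields $\mathfrak q\cap W=\emptyset$ directly, without passing through the contraction.
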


\begin{proof}
Since $R$ is $1$-dimensional and generically Gorenstein, the assumption of Lemma \ref{01} (2) is satisfied. Thus $\ocm(R)=\mathcal{F}_2(R)$. The ring $E$ satisfies the same condition, because the total quotient ring of $E$ coincides with $Q(R)$. Therefore $\ocm(E)=\mathcal{F}_2(E)$. Finally, we have $\mathcal{F}_2(R)=\Ref(R)$ and $\mathcal{F}_2(E)=\Ref(E)$ from Lemma \ref{01} (1).
\end{proof}

To prove Theorem \ref{1}, we will need the following lemma which is observed in Bass's ``ubiquity'' paper \cite{B}.
For the proof, see \cite[Lemma 4.9]{LW}.

\begin{lem} \label{1b}
Let $M\in \ocm'(R)$. Then $M$ has an $E$-module structure which is compatible with the action of $R$ on $M$.
\end{lem}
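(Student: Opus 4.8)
The plan is to equip $M$ with an $E$-action through the reflexive identification $M\cong M^{\ast\ast}=\Hom_R(\Hom_R(M,R),R)$, exploiting the description $E=\mathfrak{m}:_{Q(R)}\mathfrak{m}$. First I would record the relevant structure of $M$. Since $\ocm(R)=\Ref(R)$ by Lemma \ref{03} and $\ocm'(R)=\ocm(R)\cap\cm'(R)$, the module $M$ is reflexive and has no free direct summand; in particular the canonical evaluation map $M\to M^{\ast\ast}$ is an isomorphism, and $M$ is torsion-free, so it embeds into $M\otimes_R Q(R)$ and the $E$-action produced below will simply be multiplication by $E\subseteq Q(R)$ inside this larger module. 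Write $M^{\ast}=\Hom_R(M,R)$.

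The key observation is that, because $M$ has no free direct summand, every $R$-linear map $f\colon M\to R$ satisfies $f(M)\subseteq\mathfrak{m}$. Indeed, if $f(M)\not\subseteq\mathfrak{m}$, then the ideal $f(M)$ must equal $R$, so $f$ is a surjection onto the free, hence projective, module $R$; such a surjection splits and exhibits $R$ as a direct summand of $M$, contradicting $M\in\cm'(R)$. Granting the containment $f(M)\subseteq\mathfrak{m}$, I would define the action as follows. Fix $q\in E=\mathfrak{m}:_{Q(R)}\mathfrak{m}$, so that $q\mathfrak{m}\subseteq\mathfrak{m}\subseteq R$, and fix $m\in M$. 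Define $\beta_{q,m}\colon M^{\ast}\to Q(R)$ by $\beta_{q,m}(f)=q\cdot f(m)$, the product taken in $Q(R)$. By the containment just proved, $f(m)\in\mathfrak{m}$, whence $q\cdot f(m)\in q\mathfrak{m}\subseteq R$; thus $\beta_{q,m}$ takes values in $R$ and is visibly $R$-linear, so $\beta_{q,m}\in\Hom_R(M^{\ast},R)=M^{\ast\ast}=M$. I then set $q\cdot m:=\beta_{q,m}$.

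It remains to check that $(q,m)\mapsto q\cdot m$ defines an $E$-module structure compatible with the given $R$-action. Biadditivity and $R$-bilinearity are immediate from the formula. For associativity $(q_1q_2)\cdot m=q_1\cdot(q_2\cdot m)$ I would use associativity of multiplication in $Q(R)$ together with the identity $f(q_2\cdot m)=q_2\cdot f(m)$, which is exactly the defining equation of $\beta_{q_2,m}$ read through the identification $M^{\ast\ast}=M$. Compatibility with the original action is the case $q=r\in R$: then $\beta_{r,m}(f)=r\cdot f(m)=f(rm)$, so $\beta_{r,m}$ is evaluation at $rm$, and hence $r\cdot m=rm$. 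The only genuine content lies in the well-definedness of $\beta_{q,m}$ in the previous step, where the two hypotheses must cooperate: the absence of a free summand pushes each value $f(m)$ into $\mathfrak{m}$, while the defining relation $E=\mathfrak{m}:_{Q(R)}\mathfrak{m}$ is precisely what returns $q\cdot f(m)$ to $R$. I expect this to be the main, and essentially only, obstacle.
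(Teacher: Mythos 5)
Your proof is correct and is essentially the argument the paper relies on: the paper gives no proof of its own but cites \cite[Lemma 4.9]{LW}, and that proof is exactly your route --- since $M$ has no free summand every functional $M\to R$ lands in $\mathfrak{m}$, so $M^{*}$ is an $E$-module via $E=\mathfrak{m}:_{Q(R)}\mathfrak{m}$, and reflexivity of $M$ (Lemma \ref{03}) transports this structure to $M\cong M^{**}$. All the verifications (well-definedness, associativity via $f(q\cdot m)=qf(m)$, compatibility with the $R$-action) are carried out correctly.
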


The module-finite ring extension $R\subseteq E$ induces an inclusion $\cm(E)\subseteq \cm(R)$ since $E$ is Cohen-Macaulay over $R$. Via this inclusion and the following lemma, we can view $\cm(E)$ as a subcategory of $\cm(R)$.

\begin{lem}
Let $N,M\in \cm(E)$. Then $\Hom_R(M,N)=\Hom_E(M,N)$.
\end{lem}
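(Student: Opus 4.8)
The inclusion $\Hom_E(M,N)\subseteq\Hom_R(M,N)$ is automatic from $R\subseteq E$, so the whole content is the reverse inclusion: every $R$-linear map $M\to N$ is already $E$-linear. The plan is to fix $f\in\Hom_R(M,N)$ and $e\in E$ and to measure the failure of $E$-linearity by the map $g\colon M\to N$ given by $g(m)=f(em)-ef(m)$. Because $E$ is commutative and contains $R$, for $r\in R$ one has $e(rm)=r(em)$ and $ef(rm)=r\bigl(ef(m)\bigr)$, so a one-line computation shows that $g$ is $R$-linear; hence $g(M)$ is an $R$-submodule of $N$, and the goal reduces to proving $g(M)=0$.

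The main idea is to check that $g$ vanishes generically and then to promote this to $g=0$ using Cohen--Macaulayness. First I would localize at a minimal prime $\mathfrak{p}$ of $R$. Since $\mathfrak{p}\subsetneq\mathfrak{m}$ there is an element of $\mathfrak{m}$ outside $\mathfrak{p}$, so $\mathfrak{m}_\mathfrak{p}=R_\mathfrak{p}$; and because localization commutes with $\Hom$ for finitely generated modules over a Noetherian ring, $E_\mathfrak{p}=(\End_R\mathfrak{m})_\mathfrak{p}=\End_{R_\mathfrak{p}}(\mathfrak{m}_\mathfrak{p})=\End_{R_\mathfrak{p}}(R_\mathfrak{p})=R_\mathfrak{p}$, with the structure map $R_\mathfrak{p}\to E_\mathfrak{p}$ an isomorphism. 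Consequently the $E$- and $R$-actions on $M_\mathfrak{p}$ and $N_\mathfrak{p}$ are identified, so the localized map $f_\mathfrak{p}$, being $R_\mathfrak{p}$-linear, is automatically $E_\mathfrak{p}$-linear; therefore $g_\mathfrak{p}=0$ for every minimal prime $\mathfrak{p}$.

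Finally I would use that $R$ is one-dimensional local, so the only non-minimal prime is $\mathfrak{m}$; thus $\supp_R g(M)\subseteq\{\mathfrak{m}\}$, which means $g(M)$ has finite length. On the other hand $N\in\cm(E)\subseteq\cm(R)$, so $N$ is a maximal Cohen--Macaulay $R$-module with $\depth_R N=1$, and hence has no nonzero finite-length (i.e.\ $\mathfrak{m}$-torsion) submodule. This forces $g(M)=0$, that is $f(em)=ef(m)$ for all $m\in M$; as $e\in E$ was arbitrary, $f$ is $E$-linear, giving $\Hom_R(M,N)\subseteq\Hom_E(M,N)$. I expect the only delicate point to be the localization identity $E_\mathfrak{p}=R_\mathfrak{p}$ at the minimal primes, where the description $E=\mathfrak{m}:_{Q(R)}\mathfrak{m}$ and the hypotheses on $R$ enter; once that is in hand, the depth argument closes the proof routinely.
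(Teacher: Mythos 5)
Your argument is correct. Note that the paper actually states this lemma without proof, so there is nothing to compare line by line; the proof the author presumably has in mind is the standard one-liner using the identification $E=\mathfrak{m}:_{Q(R)}\mathfrak{m}\subseteq Q(R)$: since $M$ and $N$ are maximal Cohen--Macaulay over the one-dimensional ring $R$ they are torsion-free, so $M\subseteq M\otimes_RQ(R)$ and any $f\in\Hom_R(M,N)$ is the restriction of the $Q(R)$-linear map $f\otimes Q(R)$; as $e\in E$ acts on $M$ and $N$ through $Q(R)$, $f(em)=ef(m)$ follows at once. Your route reaches the same conclusion by a slightly longer but equally valid path: you package the failure of $E$-linearity into the $R$-linear map $g(m)=f(em)-ef(m)$, show $g$ vanishes at every minimal prime because $E_\mathfrak{p}=\End_{R_\mathfrak{p}}(\mathfrak{m}_\mathfrak{p})=R_\mathfrak{p}$ there, and then kill the finite-length image $g(M)\subseteq N$ using $\depth_RN\geq1$. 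All the delicate points you flag are fine: $\mathfrak{m}\not\subseteq\mathfrak{p}$ for minimal $\mathfrak{p}$ since $\dim R=1>0$, localization commutes with $\Hom$ out of the finitely generated module $\mathfrak{m}$, and a module of positive depth has no nonzero finite-length submodule. The $Q(R)$ argument is shorter and makes no use of the local or Cohen--Macaulay hypotheses beyond torsion-freeness, while yours is somewhat more robust in that it only uses generic triviality of $R\to E$ plus positivity of depth, a pattern that generalizes to other birational-type comparisons; either is acceptable here.
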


We prepare the following three lemmas about $\ocm(R)$. For a finitely generated $R$-module $M$, we denote by $\mu(M)$ the minimal number of generators of $M$ and by $\syz M$ the (first) syzygy of $M$ in the minimal free resolution.
The first lemma follows from \cite[Lemma 2.1]{K}.

\begin{lem} \label{3}
Let $M$ be a Cohen--Macaulay $R$-module. Then $\syz M$ has no free summand.
\end{lem}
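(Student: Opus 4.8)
The plan is to reduce modulo a nonzerodivisor and detect the (non)existence of a free summand after passing to an Artinian local ring, where minimality forces the answer. Write a minimal free presentation $F_1\xrightarrow{d}F_0\to M\to 0$, set $F=F_0$, and consider the short exact sequence $0\to\syz M\to F\to M\to 0$ with $\syz M=\operatorname{im}d$. Minimality of the presentation means the entries of $d$ lie in $\mathfrak m$, so $\syz M\subseteq\mathfrak m F$. Suppose toward a contradiction that $\syz M$ has a free direct summand; choose $v\in\syz M$ generating a rank-one free summand $Rv\cong R$, so that $\syz M=Rv\oplus N'$ and, crucially, $v\in\mathfrak m F$.

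Next I would pass to $\bar R=R/xR$ for a well-chosen $x$. Since $R$ is Cohen--Macaulay of dimension one, $\depth R=1$, so prime avoidance yields a nonzerodivisor $x\in\mathfrak m$; as $R$ is Cohen--Macaulay its associated primes are its minimal primes, whence $x$ lies in no minimal prime. Because $M$ is Cohen--Macaulay we have $\depth M\ge 1$, that is $\operatorname{Ass}M\subseteq\operatorname{Min}R$, so $x$ is a nonzerodivisor on $M$ as well. Therefore $\operatorname{Tor}_1^R(M,\bar R)=(0:_M x)=0$, and reducing the sequence modulo $x$ preserves exactness: $0\to\syz M/x\,\syz M\to\bar F\to M/xM\to 0$ with $\bar F=F/xF$. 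In particular $\syz M/x\,\syz M$ embeds into $\bar F$, and the image $\bar v$ of $v$ lies in $\bar{\mathfrak m}\bar F$, where $\bar{\mathfrak m}=\mathfrak m\bar R$.

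The contradiction then comes from the Artinian ring $\bar R$. Identifying $\syz M/x\,\syz M$ with its image in $\bar F$, the reduction of the decomposition $\syz M=Rv\oplus N'$ exhibits $\bar R\,\bar v\subseteq\bar F$ as a free $\bar R$-module of rank one, forcing $\operatorname{ann}_{\bar R}(\bar v)=0$. On the other hand, since $R$ is singular its maximal ideal is not principal, so $\bar R$ is not a field and $\bar{\mathfrak m}$ is a nonzero nilpotent ideal. Choosing $t\ge 2$ minimal with $\bar{\mathfrak m}^{\,t}=0$, any nonzero element of $\bar{\mathfrak m}^{\,t-1}$ annihilates $\bar v\in\bar{\mathfrak m}\bar F$, contradicting $\operatorname{ann}_{\bar R}(\bar v)=0$. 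Hence $\syz M$ has no free summand.

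The three hypotheses enter at distinct points, and keeping them straight is the main thing to get right: Cohen--Macaulayness of $M$ is exactly what makes $x$ a nonzerodivisor on $M$ and thereby keeps the reduced sequence short exact; minimality of the resolution is what places $v$ inside $\mathfrak m F$ and hence $\bar v$ inside $\bar{\mathfrak m}\bar F$; and singularity of $R$ is what guarantees $\bar{\mathfrak m}\neq 0$. The only real obstacle is checking that a free direct summand survives reduction modulo $x$ as a free direct summand whose generator still lies in $\bar{\mathfrak m}\bar F$; once that is verified, the annihilator computation over the nontrivial Artinian local ring $\bar R$ closes the argument at once.
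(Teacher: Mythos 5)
Your argument is correct. Note that the paper itself gives no proof of this lemma: it simply cites \cite[Lemma 2.1]{K}, so there is no in-text argument to compare against, and your self-contained proof is welcome. Each step checks out: minimality places $\syz M$ inside $\mathfrak m F$; since $\dim R=1$ and $M$ is Cohen--Macaulay, $\operatorname{Ass}M\subseteq\operatorname{Min}R=\operatorname{Ass}R$, so the $R$-regular element $x$ is also $M$-regular, the vanishing of $\operatorname{Tor}_1^R(M,R/xR)$ keeps the reduced sequence exact, and the hypothetical free summand $Rv$ reduces to a free rank-one $\bar R$-summand of $\syz M/x\syz M\hookrightarrow\bar F$, so $\operatorname{ann}_{\bar R}(\bar v)=0$; finally, singularity of $R$ in dimension one forces $\mu(\mathfrak m)\geq 2$, hence $\bar{\mathfrak m}\neq 0$, and a nonzero element of the last nonvanishing power of $\bar{\mathfrak m}$ kills $\bar v\in\bar{\mathfrak m}\bar F$, giving the contradiction. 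Two small remarks: the case $\syz M=0$ (e.g.\ $M$ free or zero) is vacuous and worth a word; and your method visibly extends to a Cohen--Macaulay local non-regular ring of any positive dimension by reducing modulo a full system of parameters regular on $R$ and $M$, which is essentially the content of the cited \cite[Lemma 2.1]{K}. The hypothesis that $R$ is generically Gorenstein is, as you implicitly observe, not needed here.
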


\begin{lem} \label{4}
Let $0\to L \to M \xrightarrow[]{\alpha} N \to 0$ be an exact sequence of $R$-modules in $\cm(R)$. If $M$ is in $\ocm'(R)$, then so is $L$.
\end{lem}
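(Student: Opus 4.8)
The plan is to realize both $M$ and $L$ as minimal first syzygies of Cohen--Macaulay modules inside one and the same free module, and then to apply Lemma \ref{3} to $L$. The point is that $\ocm(R)$ is exactly the class of first syzygies of Cohen--Macaulay modules, so once I exhibit $L$ as $\syz Y$ for a suitable $Y\in\cm(R)$, membership in $\ocm(R)$ and absence of a free summand both fall out at once. First I would use the hypothesis $M\in\ocm'(R)$ to fix a minimal presentation: since $M\in\ocm(R)$ there is $X\in\cm(R)$ with $M\cong\syz X$, and replacing the free cover of $X$ by a minimal one while using that $M$ has no free summand (so that no free summand of the syzygy is discarded), I obtain an exact sequence $0\to M\to R^n\to X\to 0$ with $n=\mu(X)$ and, crucially, $M\subseteq\mathfrak{m}R^n$.

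Next I would push $L$ into the same picture. Composing the given inclusion $L\hookrightarrow M$ with $M\hookrightarrow R^n$ realizes $L$ as a submodule of $R^n$ satisfying $L\subseteq M\subseteq\mathfrak{m}R^n$. Setting $Y=R^n/L$, the containment $L\subseteq\mathfrak{m}R^n$ says precisely that $R^n\to Y$ is a minimal free cover, so $L\cong\syz Y$ is the minimal first syzygy of $Y$. Moreover the chain $L\subseteq M\subseteq R^n$ yields a short exact sequence $0\to N\to Y\to X\to 0$, where $N=M/L$ and $X=R^n/M$.

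Then I would verify that $Y\in\cm(R)$. Since $N$ and $X$ are maximal Cohen--Macaulay over the one-dimensional ring $R$, the depth lemma applied to $0\to N\to Y\to X\to 0$ gives $\depth Y\geq 1$, whence $Y\in\cm(R)$ (after disposing of the trivial cases $M=0$ or $N=0$, which are immediate). Consequently $L\cong\syz Y$ with $Y\in\cm(R)$, which by definition places $L$ in $\ocm(R)$, and Lemma \ref{3} shows that the minimal syzygy $\syz Y$ has no free summand, so $L\in\cm'(R)$. Therefore $L\in\ocm(R)\cap\cm'(R)=\ocm'(R)$, as required.

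The main obstacle is the very first step: I must be careful to secure a \emph{minimal} presentation $M\subseteq\mathfrak{m}R^n$, because it is exactly this minimality that is inherited by $L\subseteq\mathfrak{m}R^n$ and that allows Lemma \ref{3} to be reapplied to $Y$. This is where the hypothesis that $M$ has no free summand (and not merely $M\in\ocm(R)$) enters essentially; without it the cover $R^n\to Y$ need not be minimal, and $L$ could pick up a free summand. I note that one could instead prove $L\in\ocm(R)$ directly via reflexivity (Lemma \ref{03}), using a double-dual diagram chase with $M\to M^{**}$ an isomorphism and $N\to N^{**}$ injective, but the syzygy route above is cleaner and delivers the no-free-summand conclusion simultaneously.
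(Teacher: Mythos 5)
Your proof is correct and is essentially the paper's own argument: the module $Y=R^n/L$ you construct is exactly the pushout $P$ in the paper's diagram, the extension $0\to N\to Y\to X\to 0$ is the same one used there to see that $Y$ is Cohen--Macaulay, and the conclusion via Lemma \ref{3} is identical. The only (cosmetic) difference is how minimality of the cover $R^n\to Y$ is checked --- you use the containment $L\subseteq M\subseteq\mathfrak{m}R^n$, while the paper counts generators via $a=\mu(M')\leq\mu(P)\leq a$; both are fine.
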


\begin{proof}
Since $M$ is in $\ocm'(R)$, there is an exact sequence $0 \to M \xrightarrow[]{\beta} R^{\oplus a} \xrightarrow[]{\gamma} M' \to 0$ with a Cohen--Macaulay $R$-module $M'$. As $M$ has no free summand, $\gamma$ is a minimal free cover. In particular, $\mu(M')=a$. Taking the pushout of homomorphisms $\beta$ and $\gamma$, we have the following diagram.
$$
\xymatrix{
& & 0 \ar[d] & 0 \ar[d] &\\
0 \ar[r] & L \ar@{=}[d] \ar[r] & M \ar[d]^\beta \ar[r]^\alpha & N \ar[d] \ar[r] & 0\\
0 \ar[r] & L \ar[r] & R^{\oplus a} \ar[d]^\gamma \ar[r]^\delta & P \ar[d] \ar[r] & 0\\
& & M' \ar[d] \ar@{=}[r] & M' \ar[d] &\\
& & 0 & 0 &
}
$$
The surjections $R^{\oplus a}\to P$ and $P\to M'$ show that $a=\mu(M')\leq\mu(P)\leq a$. Thus $\delta$ is a minimal free cover and $L=\syz P$. Lemma \ref{3} implies that $L$ has no free summand and hence $L$ is in $\ocm'(R)$.
\end{proof}

\begin{lem} \label{1a}
The $R$-modules $\mathfrak{m}$ and $E$ belong to $\ocm'(R)$.
\end{lem}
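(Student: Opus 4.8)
The statement places $\mathfrak{m}$ and $E$ into $\ocm'(R)=\ocm(R)\cap\cm'(R)$. By Lemma \ref{03} we have $\ocm(R)=\Ref(R)$, and a reflexive module over the one-dimensional Cohen--Macaulay ring $R$ is torsionfree, hence Cohen--Macaulay. So for a module $M$, membership in $\ocm'(R)$ is equivalent to the conjunction ``$M$ is reflexive'' and ``$M$ has no free summand.'' The plan is therefore to prove, for each of $\mathfrak{m}$ and $E$, reflexivity and the absence of a free direct summand. Throughout I would work inside $Q(R)$, viewing $\mathfrak{m}$ and $E=\mathfrak{m}:_{Q(R)}\mathfrak{m}$ as fractional ideals; both contain a nonzerodivisor (for $\mathfrak{m}$ because $\depth R=1$, for $E$ because $R\subseteq E$). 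For such a fractional ideal $I$ one has the standard identification $\Hom_R(I,R)\cong R:_{Q(R)}I$, under which the biduality map $I\to I^{\ast\ast}$ becomes the inclusion $I\hookrightarrow R:_{Q(R)}(R:_{Q(R)}I)$; thus $I$ is reflexive precisely when $I=R:_{Q(R)}(R:_{Q(R)}I)$.

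The computational heart consists of two identities. First, $E=R:_{Q(R)}\mathfrak{m}$: the inclusion $E\subseteq R:_{Q(R)}\mathfrak{m}$ is clear since $x\mathfrak{m}\subseteq\mathfrak{m}\subseteq R$ for $x\in E$; for the reverse, if $x\mathfrak{m}\subseteq R$ then $x\mathfrak{m}$ is an ideal of $R$ which cannot equal $R$ (otherwise $\mathfrak{m}$ would be invertible, forcing $R$ to be a discrete valuation ring and contradicting that $R$ is singular), so $x\mathfrak{m}\subseteq\mathfrak{m}$ and $x\in E$. Second, $R:_{Q(R)}E=\mathfrak{m}$: for $x\in\mathfrak{m}$ and $e\in E$ one has $xe\in\mathfrak{m}$ because $e\mathfrak{m}\subseteq\mathfrak{m}$, giving $\mathfrak{m}\subseteq R:_{Q(R)}E$, while an element of $R:_{Q(R)}E$ lies in $R$ (as $1\in E$) and must be a nonunit (a unit would force $E\subseteq R$), hence lies in $\mathfrak{m}$. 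Combining these yields $R:_{Q(R)}(R:_{Q(R)}\mathfrak{m})=R:_{Q(R)}E=\mathfrak{m}$ and $R:_{Q(R)}(R:_{Q(R)}E)=R:_{Q(R)}\mathfrak{m}=E$, so both $\mathfrak{m}$ and $E$ are reflexive; in particular they lie in $\ocm(R)$ and are Cohen--Macaulay.

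For the absence of free summands I would use that $M$ has a free summand if and only if $M$ admits a surjection onto $R$ (a summand $R$ projects; a surjection onto the free module $R$ splits). A surjection $\mathfrak{m}\twoheadrightarrow R$ corresponds, via $\Hom_R(\mathfrak{m},R)\cong R:_{Q(R)}\mathfrak{m}=E$, to an element $\phi\in E$ with $\phi\mathfrak{m}=R$; this again makes $\mathfrak{m}$ invertible and $R$ a discrete valuation ring, a contradiction. A surjection $E\twoheadrightarrow R$ corresponds, via $\Hom_R(E,R)\cong R:_{Q(R)}E=\mathfrak{m}$, to an element $\psi\in\mathfrak{m}$ with $\psi E=R$; but $\psi E\subseteq\mathfrak{m}E\subseteq\mathfrak{m}\subsetneq R$, again a contradiction. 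Hence $\mathfrak{m},E\in\cm'(R)$, and together with the previous paragraph, $\mathfrak{m},E\in\ocm'(R)$.

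The main obstacle I anticipate is not any single computation but funneling the hypothesis that $R$ is singular into the precise assertion that $\mathfrak{m}$ is not invertible (equivalently, $R$ is not a discrete valuation ring); this is exactly what forbids both the equality $x\mathfrak{m}=R$ in the reflexivity step and the existence of free summands, so I would isolate and state it at the outset. A secondary point needing care is the identification of the biduality map with the double-colon operation on fractional ideals, which relies on $\mathfrak{m}$ and $E$ containing nonzerodivisors and on $Q(R)$ being Gorenstein (guaranteed by generic Gorensteinness); I would invoke the standard fractional-ideal correspondence here rather than reprove it.
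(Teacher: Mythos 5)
Your proof is correct in outline and takes a genuinely different route from the paper's. The paper realizes $\mathfrak{m}$ as $\syz k$ via the horseshoe lemma applied to $0\to k\to R/(x)\to N\to 0$, cites Auslander--Ding--Solberg for the absence of a free summand of $\mathfrak{m}$, and then obtains $E=\Hom_R(\mathfrak{m},\mathfrak{m})=\Hom_R(\mathfrak{m},R)=\syz^2\tr\mathfrak{m}\in\ocm(R)$, deducing the no-free-summand statement for $E$ from that for $\mathfrak{m}$ by duality. You instead route everything through Lemma \ref{03} (so that membership in $\ocm(R)$ becomes reflexivity) and the fractional-ideal calculus: the two identities $R:_{Q(R)}\mathfrak{m}=E$ and $R:_{Q(R)}E=\mathfrak{m}$ give reflexivity of both modules at once and simultaneously identify $\Hom_R(\mathfrak{m},R)$ and $\Hom_R(E,R)$, so the exclusion of free summands becomes an elementary computation and the external citation is avoided. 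This is a clean, self-contained argument, though it is specific to the picture of $\mathfrak{m}$ and $E$ as fractional ideals in $Q(R)$, whereas the paper's syzygy argument is the one that generalizes.

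One step needs shoring up. In proving $R:_{Q(R)}E\subseteq\mathfrak{m}$ you dismiss the possibility that some $x\in R:_{Q(R)}E$ is a unit by saying a unit ``would force $E\subseteq R$''; but since $R\subseteq E$ always holds, what you actually need is that $E\neq R$, and this is nowhere justified in your write-up (non-invertibility of $\mathfrak{m}$ by itself does not immediately give it). It is true: applying $\Hom_R(-,R)$ to $0\to\mathfrak{m}\to R\to k\to 0$ gives $\Hom_R(\mathfrak{m},R)/R\cong\Ext^1_R(k,R)\neq 0$ because $\depth R=1$, so $R:_{Q(R)}\mathfrak{m}\supsetneq R$, and combined with your first identity $E=R:_{Q(R)}\mathfrak{m}$ this yields $E\supsetneq R$. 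The same point is used silently when you exclude a surjection $E\twoheadrightarrow R$: if $E$ were equal to $R$ it would trivially have a free summand. Add this one line and the argument is complete.
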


\begin{proof}
Take a non zerodivisor $x$ of $R$. Then $R/(x)$ has depth zero. Therefore there is a short exact sequence
$
0 \to k \to R/(x) \to N \to 0
$
with some $R$-module $N$. Applying horseshoe lemma to this sequence, we have an exact sequence
$
0 \to \syz k \to R^{\oplus n} \to \syz N \to 0
$
for some $n\geq 1$. Since $\syz N$ is Cohen--Macaulay and $\syz k=\mathfrak{m}$, it follows that $\mathfrak{m}\in \ocm(R)$. This implies that $\mathfrak{m}$ is reflexive over $R$ by Lemma \ref{03}. By \cite[ Corollary 5.7]{ADS}, $\mathfrak{m}$ has no free summand. It yields that $E=\Hom_R(\mathfrak{m},\mathfrak{m})=\Hom_R(\mathfrak{m},R)$. Thus $E=\syz^2 \tr \mathfrak{m}=\syz(\syz \tr\mathfrak{m})$. In particular, $E$ is in $\ocm(R)$. If $E$ has a nonzero free summand, then, since $\mathfrak{m}$ is reflexive and $E=\Hom_R(\mathfrak{m},R)$, $\mathfrak{m}$ must have a nonzero free summand. This is a contradiction. As a consequence, $\mathfrak{m}$ and $E\in\ocm'(R)$.
\end{proof}

Now we can give a proof of Theorem \ref{1} (1).

\begin{proof}[Proof of Theorem \ref{1} {\rm(1)}]
To show the inclusion $\cm(E)\subseteq \cm'(R)$, we only need to prove that all modules in $\cm(E)$ have no $R$-free summand. Assume that $M\in \cm(E)$ has a nonzero $R$-free summand. Then $E$ also has a nonzero $R$-free summand, since there is a surjection $E^{\oplus n} \to M$. This contradicts Lemma \ref{1a}.

The inclusion $\ocm'(R)\subseteq \cm(E)$ follows by Lemma \ref{1b}.

Finally, we show $\ocm(E)\subseteq \ocm'(R)$. Let $M$ be an $E$-module in $\ocm(E)$. Then we have an exact sequence $0 \to M \to E^{\oplus n} \to N \to 0$ with some $E$-module $N$ in $\cm(E)$. In view of Lemmas \ref{3} and \ref{4}, this sequence yields that $M$ is in $\ocm'(R)$.
\end{proof}

The $R$-module $\mathfrak{m}$ plays an important role to prove the theorem. Next lemma says that $\mathfrak{m}$ can be regarded as a ``cogenerator'' of $\ocm'(R)$.

\begin{lem} \label{7}
Let $M$ be an $R$-module in $\ocm'(R)$. Then there is an exact sequence
\[
0 \to M \to \mathfrak{m}^{\oplus n} \to N \to 0
\]
of modules in $\cm(E)$.
\end{lem}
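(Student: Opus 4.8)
The plan is to realize $\mathfrak{m}$ as a dualizing object for the functor $\Hom_E(-,\mathfrak{m})$ on $\cm(E)$ and to obtain the desired sequence by dualizing a minimal presentation. First I record the key identity $\mathfrak{m}\cong\Hom_R(E,R)$: by Lemma \ref{1a} we have $\Hom_R(\mathfrak{m},R)=E$ and $\mathfrak{m}$ is reflexive, so applying $\Hom_R(-,R)$ gives $\Hom_R(E,R)=\mathfrak{m}^{**}=\mathfrak{m}$. Consequently, for every $E$-module $X$ the restriction–coinduction adjunction along $R\to E$ yields a natural isomorphism of $E$-modules
\[
\Hom_E(X,\mathfrak{m})\cong\Hom_E(X,\Hom_R(E,R))\cong\Hom_R(X,R).
\]
Writing $(-)^\vee=\Hom_E(-,\mathfrak{m})$, this identifies $M^\vee$ with the $R$-dual $M^{*}=\Hom_R(M,R)$ and, upon applying it twice, identifies the biduality map $M\to M^{\vee\vee}$ with the canonical map $M\to M^{**}$. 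Since $M\in\ocm'(R)=\Ref(R)$ by Lemma \ref{03}, the latter is an isomorphism; verifying that this abstract identification genuinely carries the natural evaluation morphism onto the reflexivity isomorphism is the technical heart of the argument and the step I expect to require the most care.

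Granting $M\cong M^{\vee\vee}$, I would next choose a minimal $E$-generating system $f_1,\dots,f_n$ of the finitely generated $E$-module $M^\vee=\Hom_E(M,\mathfrak{m})$, i.e. a surjection $g\colon E^{\oplus n}\to M^\vee$, and set
\[
\phi=(f_1,\dots,f_n)\colon M\longrightarrow \mathfrak{m}^{\oplus n}.
\]
A direct check shows $\phi$ factors as $M\xrightarrow{\ \cong\ }M^{\vee\vee}\xrightarrow{\ \Hom_E(g,\mathfrak{m})\ }\mathfrak{m}^{\oplus n}$, so, because $\Hom_E(-,\mathfrak{m})$ is left exact and $g$ is surjective, $\phi$ is injective. (Alternatively, injectivity follows since the $f_i$ generate $M^\vee\cong M^{*}$, whence $\bigcap_i\Ker f_i$ lies in the kernel of every $R$-linear form on $M$, which is zero as $M$ is torsionless.) This produces the required monomorphism $M\hookrightarrow\mathfrak{m}^{\oplus n}$; note $\mathfrak{m}^{\oplus n}\in\cm(E)$ because $\mathfrak{m}\in\ocm'(R)\subseteq\cm(E)$ by Theorem \ref{1}(1).

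It remains to show the cokernel $N=\Cok\phi$ lies in $\cm(E)$. Setting $K=\Ker g$, so that $0\to K\to E^{\oplus n}\xrightarrow{g}M^\vee\to 0$ is exact, I would apply $\Hom_E(-,\mathfrak{m})$ to obtain the exact sequence
\[
0\to M^{\vee\vee}\to \mathfrak{m}^{\oplus n}\to K^\vee\to\Ext^1_E(M^\vee,\mathfrak{m}),
\]
whose first map is $\Hom_E(g,\mathfrak{m})$. Under $M\cong M^{\vee\vee}$ its image is exactly $\operatorname{im}\phi$, so $N$ is the image of $\mathfrak{m}^{\oplus n}\to K^\vee$, hence a submodule of $K^\vee=\Hom_E(K,\mathfrak{m})$. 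Finally, $K\subseteq E^{\oplus n}$, and applying $\Hom_E(-,\mathfrak{m})$ to a free presentation $E^{\oplus b}\to E^{\oplus a}\to K\to 0$ embeds $K^\vee$ into $\mathfrak{m}^{\oplus a}$. As $\mathfrak{m}^{\oplus a}$ is Cohen--Macaulay of dimension one it has no nonzero submodule of finite length, so neither do $K^\vee$ nor its submodule $N$; thus $\depth_E N\geq 1=\dim E$ and $N\in\cm(E)$. This yields the asserted sequence $0\to M\to\mathfrak{m}^{\oplus n}\to N\to 0$ of modules in $\cm(E)$.
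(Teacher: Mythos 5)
Your argument is correct, but it takes a genuinely different route from the paper's. The paper's proof is a three-line diagram chase: starting from the defining embedding $0\to M\xrightarrow{\alpha}R^{\oplus n}\to N'\to 0$ of $M\in\ocm'(R)$, it observes that since $M$ has no free summand no component $M\to R$ of $\alpha$ can be surjective, so $\alpha$ factors through the inclusion $\mathfrak{m}^{\oplus n}\hookrightarrow R^{\oplus n}$; the cokernel $N$ of the resulting map $M\to\mathfrak{m}^{\oplus n}$ then injects into the Cohen--Macaulay module $N'$ (snake lemma), hence is Cohen--Macaulay, and it is an $E$-module because $\Hom_R(M,\mathfrak{m}^{\oplus n})=\Hom_E(M,\mathfrak{m}^{\oplus n})$. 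You instead identify $\mathfrak{m}\cong\Hom_R(E,R)$ and hence $\Hom_E(-,\mathfrak{m})\cong\Hom_R(-,R)$ on $E$-modules, and produce the embedding by dualizing a minimal $E$-free cover of $M^{\vee}\cong M^{*}$, using reflexivity of $M$ (Lemma \ref{03}). Your route costs more: the compatibility of the two biduality maps that you rightly flag as the technical heart is a real verification the paper never needs, and your treatment of the cokernel (embedding $N$ into $K^{\vee}\subseteq\mathfrak{m}^{\oplus a}$ to rule out finite-length submodules) replaces the paper's one-line comparison with $N'$. In exchange you get something conceptually sharper: $\mathfrak{m}$ is exhibited as the dualizing object $\Hom_R(E,R)$ for $\cm(E)$, the integer $n$ is pinned down as $\mu_E(\Hom_R(M,R))$, and the ``cogenerator'' role of $\mathfrak{m}$ announced before the lemma becomes a formal duality statement rather than an ad hoc factorization. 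Both proofs are complete and valid; just make sure, if you keep yours, to actually carry out the check that the adjunction isomorphisms intertwine the evaluation map $M\to M^{\vee\vee}$ with the canonical map $M\to M^{**}$, since injectivity of $\phi$ and the identification of $\Cok\phi$ with the image of $\mathfrak{m}^{\oplus n}\to K^{\vee}$ both rest on it (your fallback torsionless argument already covers injectivity independently).
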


\begin{proof}
As $M$ is in $\ocm'(R)$, we have an exact sequence $0 \to M \xrightarrow[]{\alpha} R^{\oplus n} \to N' \to 0$ with a Cohen-Macaulay $R$-module $N$. Then, since $M$ has no free summand, there is a homomorphism $\beta:M \to \mathfrak{m}$ such that $\alpha=i\circ \beta$, where $i$ is the natural inculusion $\mathfrak{m}^{\oplus n} \to R^{\oplus n}$. Let $N$ be the cokernel of $\beta$. We have the following commutative diagram with exact rows and columns.
\[
\xymatrix{
0 \ar[r] & M \ar[r]^\alpha & R^{\oplus n} \ar[r] & N' \ar[r] & 0\\
0 \ar[r] & M \ar@{=}[u] \ar[r]^\beta & \mathfrak{m}^{\oplus n} \ar[u]^i \ar[r] & N \ar[u] \ar[r] & 0\\
& & 0 \ar[u] & 0 \ar[u] &
}
\]
Since $\beta \in \Hom_R(M,\mathfrak{m})=\Hom_E(M,\mathfrak{m})$, $N$ is a $E$-module. The exactness of $0\to N \to N'$ implies that $N$ is Cohen-Macaulay over $R$. Thus $N$ is in $\cm(E)$.
\end{proof}

To prove Theorem \ref{1} {\rm (2)}, we will use the following lemma (cf. \cite[Corollary 3,2]{BV}).

\begin{lem} \label{2}
Let $A$ be a $1$-dimensional Cohen--Macaulay generically Gorenstein ring and $I$ be an ideal of $A$ containing a non zerodivisor such that $\Hom_A(I,I)$ is naturally isomorphic to $A$. If $I$ is reflexive, then it is isomorphic to $A$.
\end{lem}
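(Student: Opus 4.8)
The plan is to present $I$ as a fractional ideal and to prove that it is invertible; since $A$ is local (or, as in the application to $E$, semilocal) an invertible fractional ideal of constant rank one is free, which gives $I\cong A$. Write $Q=Q(A)$ for the total quotient ring and, for a fractional ideal $J$ containing a non-zerodivisor, put $J^{-1}=(A:_Q J)=\{c\in Q : cJ\subseteq A\}$. Because $I$ contains a non-zerodivisor we have $IQ=Q$, so $I$ has rank one and the usual identifications $\Hom_A(J,A)\cong J^{-1}$ and $\End_A(J)=(J:_Q J)$ apply. In this language the two hypotheses read $\End_A(I)=(I:I)=A$ and $(I^{-1})^{-1}=I$ (reflexivity).

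The key preliminary step is the identity $(I^{-1}:I^{-1})=A$. First I would record the elementary fact that, for any fractional ideal $J$, the inclusion $cJ\subseteq J$ implies $cJ^{-1}\subseteq J^{-1}$: if $y\in J^{-1}$ then $(cy)J=y(cJ)\subseteq yJ\subseteq A$, so $cy\in J^{-1}$. Applying this with $J=I$ gives $(I:I)\subseteq(I^{-1}:I^{-1})$; applying it with $J=I^{-1}$ shows that any $c$ with $cI^{-1}\subseteq I^{-1}$ satisfies $c(I^{-1})^{-1}\subseteq(I^{-1})^{-1}$, that is $cI\subseteq I$ by reflexivity. Hence $(I^{-1}:I^{-1})=(I:I)=A$. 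This is the one point where reflexivity is indispensable: in general one only obtains $(I^{-1}:I^{-1})=(I^{**}:I^{**})$, which can be strictly larger than $A$, so that $\End_A(I)=A$ by itself does not force invertibility.

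Next set $\mathfrak a=I\,I^{-1}\subseteq A$. Since $1\in I^{-1}$ and $I$ contains a non-zerodivisor $x$, we get $x=x\cdot 1\in\mathfrak a$, so $\mathfrak a$ is a regular ideal. A colon computation together with the previous step gives $\mathfrak a^{-1}=(A:I I^{-1})=((A:I):I^{-1})=(I^{-1}:I^{-1})=A$. I would then apply $\Hom_A(-,A)$ to the sequence $0\to\mathfrak a\to A\to A/\mathfrak a\to 0$. Here $\Hom_A(A/\mathfrak a,A)=(0:_A\mathfrak a)=0$ (as $\mathfrak a$ is regular), $\Hom_A(A,A)=A$, and $\Hom_A(\mathfrak a,A)\cong\mathfrak a^{-1}=A$; under these identifications the restriction map $\Hom_A(A,A)\to\Hom_A(\mathfrak a,A)$ is the inclusion $A\hookrightarrow\mathfrak a^{-1}=A$, hence the identity. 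The long exact sequence therefore yields $\Ext^1_A(A/\mathfrak a,A)=0$.

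Finally I would rule out $\mathfrak a\neq A$. If $\mathfrak a$ were proper, then $A/\mathfrak a$ would be nonzero with support contained in $V(\mathfrak a)$, which consists of maximal ideals because $\mathfrak a$ is regular and $\dim A=1$; thus $A/\mathfrak a$ has dimension zero. Localising at any $\mathfrak p\in\supp(A/\mathfrak a)$, the ring $A_{\mathfrak p}$ is Cohen--Macaulay of dimension one and $(A/\mathfrak a)_{\mathfrak p}$ has finite length, so $\Ext^1_{A_{\mathfrak p}}((A/\mathfrak a)_{\mathfrak p},A_{\mathfrak p})\neq 0$ and hence $\Ext^1_A(A/\mathfrak a,A)\neq 0$, contradicting the vanishing above. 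Therefore $\mathfrak a=I I^{-1}=A$, i.e. $I$ is invertible, and over the (semi)local ring $A$ this makes $I$ free of rank one, so $I\cong A$. The main obstacle is precisely this invertibility step, and within it the identification $\mathfrak a^{-1}=(I^{-1}:I^{-1})=A$, where reflexivity must be inserted; the surrounding $\Ext$-argument is then the standard grade/depth dichotomy over a one-dimensional Cohen--Macaulay ring.
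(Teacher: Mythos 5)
Your proof is correct and is essentially the paper's argument translated into the language of fractional and colon ideals: the paper likewise dualizes the image of the evaluation map $I\otimes_A\Hom_A(I,A)\to A$ (your $\mathfrak a=II^{-1}$), identifies $\Hom_A(\mathfrak a,A)$ with $\Hom_A(I,I)\cong A$ by combining reflexivity with Hom-tensor adjunction (your colon computation $\mathfrak a^{-1}=(I^{-1}:I^{-1})=(I:I)=A$), deduces $\Ext^1_A(A/\mathfrak a,A)=0$, and forces $\mathfrak a=A$ by the same one-dimensional Cohen--Macaulay grade argument. The only point to flag is the last step, where you (rightly) invoke semilocality to pass from invertibility of $I$ to $I\cong A$; the paper's closing claim that surjectivity of the evaluation map yields a surjection $I\to A$ tacitly uses the same (semi)locality, which is harmless since the lemma is only applied to the semilocal rings $R$ and $E$.
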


\begin{proof}
Denote by $\sigma$ the natural homomorphism $I \to \Hom_A(\Hom_A(I,A),A)$, which maps $a$ to $[\phi \mapsto \phi(a)]$ for $a\in I$ and $\phi\in \Hom_A(I,A)$. Then $\sigma$ is isomorphism by the reflexivity of $I$. Let $\mathrm{ev}:I\otimes_A \Hom_A(I,A) \to A$ be the evaluation homomorphism, which maps $a\otimes \phi$ to $\phi(a)$. Let $K$ be the kernel of $\mathrm{ev}$, $C$ be the cokernel of $\mathrm{ev}$, and $T$ be the image of $\mathrm{ev}$. we have the following exact sequences.
\begin{align}
0 \to K \to I\otimes_A \Hom_A(I,A) \to T \to 0\\
0 \to T \to A \to C \to 0
\end{align}
Let $Q(A)$ be the total quotient ring of $A$. Then the ideal $I\otimes_A Q(A)$ of $Q(A)$ is isomorphic to $Q(A)$. Thus $\mathrm{ev}\otimes Q(A)$ is an isomorphism. Then $K\otimes_A Q(A)=0=C\otimes_A Q(A)$, and it follows from Cohen-Macaulayness of $A$ that $\Hom_A(K,A)=0=\Hom_A(C,A)$. Applying $\Hom_A(-,A)$ to the above sequences, we have $\Hom_A(T,A)\cong \Hom_A(I\otimes_A \Hom_A(I,A),A)$ and the following exact sequence
\[
0 \to A=\Hom_A(A,A) \xrightarrow[]{f} \Hom_A(T,A) \to \Ext^1_A(C,A)\to 0.
\]
Via the isomorphism $\Hom_A(T,A)\cong \Hom_A(I\otimes_A \Hom_A(I,A),A)$, the map $f$ corresponds to the map $\Hom_A(\mathrm{ev},A):A \to \Hom_A(I\otimes_A \Hom_A(I,A),A)$, which maps $1_A$ to $\mathrm{ev}$. By the hom-tensor adjointness and composition with $\Hom_A(I,\sigma^{-1})$, we get an isomorphism
\[
\Psi:\Hom_A(I\otimes_A \Hom_A(I,A),A)\rightarrow \Hom_A(I,\Hom_A(\Hom_A(I,A),A)\rightarrow \Hom_A(I,I).
\]
Consider the composition $\Psi\circ\Hom_A(\mathrm{ev},A):A \to \Hom_A(I,I)$. Then it maps $1_A$ to $\mathrm{id}_I$. Therefore $\Psi\circ\Hom_A(\mathrm{ev},A)$ is an isomorphism by the assumption. It yields that $\Hom_A(\mathrm{ev},A)$ and $f$ are isomorphisms and hence $\Ext^1_A(C,A)=0$. Since $A$ is $1$-dimensional, we obtain that $C=0$. In this case, $\mathrm{ev}$ is a surjection and we can see that there is a surjective map $I\to A$. This yields that $I$ is isomorphic to $A$.
\end{proof}

Now we show the Theorem \ref{1} {\rm(2)}.

\begin{proof}[Proof of Theorem \ref{1} {\rm(2)}]
Assume the equality $\ocm(E)=\ocm'(R)$ holds. Then $\mathfrak{m}\in \ocm'(R)=\Ref(E)$. Since $\Hom_E(\mathfrak{m},\mathfrak{m})=\End_R(\mathfrak{m})=E$, we can apply Lemma \ref{2} to see that $\mathfrak{m}$ is isomorphic to $E$. By \cite[Proposition 2.5]{O}, it follows that $R$ has minimal multiplicity.

Conversely, assume that $R$ has minimal multiplicity. Then $\mathfrak{m}$ is isomorphic to $E$ by using \cite[Proposition 2.5]{O} again. Let $M\in \ocm'(R)$. Lemma \ref{7} yields that there is an exact sequence $0 \to M \to \mathfrak{m}^{\oplus n} \to N \to 0$ with $N\in\cm(E)$. Since $\mathfrak{m}^{\oplus n}\cong E^{\oplus n}$, we have $M\in \ocm(E)$. Thus $\ocm(E)=\ocm'(R)$.
\end{proof}

We denote by $\omega$ a canonical module of $R$ and set $(-)^\dag=\Hom_R(-,\omega)$. If $\omega$ exists, then we can give an equivalent condition to the equality $\cm(S)=\ocm'(R)$ by using the canonical dual $(-)^\dag$.

\begin{lem} \label{5}
Assume that $R$ has a canonical module $\omega$. Then the equality $\cm(E)=\ocm'(R)$ holds if and only if $E^\dag \in \ocm'(R)$.
\end{lem}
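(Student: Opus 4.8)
The plan is to reduce the asserted equivalence to the always-available inclusion $\ocm'(R)\subseteq\cm(E)$ from Theorem \ref{1}(1): since that inclusion holds unconditionally, the equality $\cm(E)=\ocm'(R)$ is equivalent to the reverse inclusion $\cm(E)\subseteq\ocm'(R)$. For the \emph{only if} direction I would first record that $E^\dag=\Hom_R(E,\omega)$ lies in $\cm(E)$: as $E$ is a maximal Cohen--Macaulay $R$-module, its canonical dual $E^\dag$ is again maximal Cohen--Macaulay over $R$, and it carries an $E$-module structure inherited from that of $E$, so it belongs to $\cm(E)$ viewed inside $\cm(R)$. Granting $\cm(E)=\ocm'(R)$, we immediately obtain $E^\dag\in\ocm'(R)$, which settles this direction.

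For the \emph{if} direction, assume $E^\dag\in\ocm'(R)$ and take an arbitrary $N\in\cm(E)$; the goal is $N\in\ocm'(R)$. The key device is the canonical duality. I would use that $(-)^\dag=\Hom_R(-,\omega)$ is a duality on $\cm(R)$ and that $E^\dag$ is precisely the canonical module of $E$, so that for any $X\in\cm(E)$ one has $\Hom_E(X,E^\dag)\cong X^\dag$ with $X^\dag\in\cm(E)$. Now choose a surjection $E^{\oplus m}\to N^\dag$ of $E$-modules, let $K$ be its kernel (which is maximal Cohen--Macaulay over the one-dimensional ring $E$, being a submodule of a free $E$-module), and apply $(-)^\dag$. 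Since all three modules are maximal Cohen--Macaulay over $R$, the relevant $\Ext$ into $\omega$ vanishes and the sequence stays exact, yielding
\[
0 \to N \to (E^\dag)^{\oplus m} \to K^\dag \to 0
\]
with every term in $\cm(E)\subseteq\cm(R)$; here I have used $N^{\dag\dag}\cong N$ and $(E^{\oplus m})^\dag\cong(E^\dag)^{\oplus m}$.

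It then remains to feed this sequence into Lemma \ref{4}, for which I must verify $(E^\dag)^{\oplus m}\in\ocm'(R)$. The reflexivity/syzygy part is clear from $E^\dag\in\ocm'(R)\subseteq\ocm(R)$ together with closure of $\ocm(R)$ under direct sums; to control free summands I would write $E^\dag=\syz Z$ for some $Z\in\cm(R)$, whence $(E^\dag)^{\oplus m}=\syz(Z^{\oplus m})$ is a syzygy of a Cohen--Macaulay module and hence has no free summand by Lemma \ref{3}. Thus $(E^\dag)^{\oplus m}\in\ocm'(R)$, and Lemma \ref{4} gives $N\in\ocm'(R)$, completing the inclusion $\cm(E)\subseteq\ocm'(R)$. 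The step I expect to be the real crux is producing the embedding $N\hookrightarrow(E^\dag)^{\oplus m}$ with Cohen--Macaulay cokernel --- that is, recognizing $E^\dag$ as the canonical module of $E$ and dualizing an $E$-free cover of $N^\dag$ --- since everything afterward is a direct appeal to Lemmas \ref{3} and \ref{4}.
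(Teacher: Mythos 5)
Your proof is correct and follows essentially the same route as the paper: dualize an $E$-free cover of $N^\dag$ to embed $N$ into $(E^\dag)^{\oplus m}$ with Cohen--Macaulay cokernel, then invoke Lemma \ref{4}. You additionally spell out why the dualized sequence stays exact and why $(E^\dag)^{\oplus m}$ lies in $\ocm'(R)$, details the paper leaves implicit.
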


\begin{proof}
The ``only if" part is clear. Now we assume $E^\dag \in \ocm'(R)$. Let $M$ be in $\cm(E)$. Taking a free cover of $M^\dag$ over $E$, we get an exact sequence $0 \to N\to E^{\oplus n} \to M^\dag \to 0$ with some $E$-module $N$. Since $M^\dag,E\in\cm(R)$, $N$ is also in $\cm(R)$. Applying $(-)^\dag$ to this sequence, we have an exact sequence $0 \to M \to (E^\dag)^{\oplus n} \to N^\dag \to 0$. Using Lemma \ref{4}, $M$ is in $\ocm'(R)$. This shows that $\cm(E)=\ocm'(R)$.
\end{proof}

If the completion $\widehat{R}$ of $R$ is generically Gorenstein, then $R$ has a canonical module by \cite[Proposition 2.7]{GMP}. In this situation, we see in the next lemma that the condition $\cm(E)=\ocm'(R)$ is stable under flat local extension.

\begin{cor} \label{6}
Let $\varphi:(R,\mathfrak{m})\to(R',\mathfrak{m}')$ be a flat local homomorphism such that $\mathfrak{m}R'=\mathfrak{m}'$. Assume that the completion $\widehat{R}$ of $R$ is generically Gorenstein. Then $\cm(E)=\ocm'(R)$ if and only if $\cm(\End_{R'}(\mathfrak{m}'))=\ocm'(R')$.
\end{cor}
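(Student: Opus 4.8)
The plan is to reduce both equalities to the criterion of Lemma \ref{5} and then to transport that criterion across the faithfully flat base change $(-)\otimes_R R'$. First I would check that $R'$ again satisfies the standing hypotheses, so that Lemma \ref{5} is available for it. Being flat and local, $\varphi$ is faithfully flat, and the closed fiber $R'/\mathfrak m R'=R'/\mathfrak m'$ is a field; hence $R'$ is a one-dimensional Cohen--Macaulay local ring, and it is singular because regularity descends along faithfully flat maps. The hypothesis that $\widehat R$ is generically Gorenstein yields, by \cite[Proposition 2.7]{GMP}, a canonical module $\omega$ of $R$, which we may take to be an ideal containing a non zerodivisor $x$; as $R'$ is flat over $R$, the element $x$ remains a non zerodivisor and $\omega':=\omega\otimes_R R'\hookrightarrow R'$ is a canonical module of $R'$ (the closed fiber being a field, canonical modules commute with this base change). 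Localizing at a minimal prime $\mathfrak q$ of $R'$ and putting $\mathfrak p=\mathfrak q\cap R$, which is a minimal prime of $R$ by the dimension formula for flat local homomorphisms, the generic Gorensteinness of $R$ gives $\omega_\mathfrak p\cong R_\mathfrak p$, whence $\omega'_\mathfrak q\cong R'_\mathfrak q$ and $R'_\mathfrak q$ is Gorenstein; thus $R'$ is generically Gorenstein and falls in the setting of Lemma \ref{5}.

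Next I would record the base-change identifications. From $\mathfrak m'=\mathfrak m R'\cong\mathfrak m\otimes_R R'$ and flat base change for $\Hom$ (every module in sight is finitely presented over the Noetherian ring $R$) one gets $E'=\End_{R'}(\mathfrak m')\cong E\otimes_R R'$, and then, using $\omega'\cong\omega\otimes_R R'$, the two canonical dualities are compatible: $(E')^{\dag'}\cong E^\dag\otimes_R R'$, where $(-)^{\dag'}=\Hom_{R'}(-,\omega')$. By Lemma \ref{5} applied to $R$ and to $R'$, it therefore suffices to prove
\[
E^\dag\in\ocm'(R)\iff E^\dag\otimes_R R'\in\ocm'(R').
\]

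The heart of the argument is thus to show that membership in $\ocm'$ is preserved and reflected by the faithfully flat base change $(-)\otimes_R R'$. By Lemma \ref{03} we have $\ocm'(R)=\Ref(R)\cap\cm'(R)$, so that a module lies in $\ocm'(R)$ exactly when it is reflexive and has no free summand, and I would treat these two conditions separately. Reflexivity is handled by base change of the biduality map: writing $(-)^*=\Hom_R(-,R)$, for finitely presented $X$ the $R'$-dual of $X\otimes_R R'$ is canonically $X^*\otimes_R R'$, so the biduality map $X\to X^{**}$ becomes the biduality map of $X\otimes_R R'$ after applying $(-)\otimes_R R'$, and faithful flatness makes one an isomorphism if and only if the other is. For the absence of a free summand I would pass to trace ideals: a finitely generated module $M$ over a local ring has a free summand precisely when its trace ideal $\tau(M)=\sum_{\phi\in M^*}\phi(M)$ is the whole ring, and $\tau$ commutes with flat base change, $\tau_{R'}(M\otimes_R R')=\tau_R(M)R'$. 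Since $\varphi$ is faithfully flat, $\tau_R(M)R'\cap R=\tau_R(M)$, so $\tau_R(M)R'=R'$ if and only if $\tau_R(M)=R$; hence $M$ has a free $R$-summand iff $M\otimes_R R'$ has a free $R'$-summand. Combining the two equivalences (and noting $E^\dag\in\cm(R)$ and $E^\dag\otimes_R R'\cong(E')^{\dag'}\in\cm(R')$) gives the displayed equivalence, and chaining the two instances of Lemma \ref{5} finishes the proof.

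I expect the main obstacle to be the first step, namely verifying that $R'$ genuinely inherits the standing hypotheses, above all that it is generically Gorenstein and carries the canonical module $\omega\otimes_R R'$. This point is delicate because $\varphi$ controls only the closed fiber through $\mathfrak m R'=\mathfrak m'$ and says nothing directly about the fibers over the minimal primes; the way around it is to read off Gorensteinness of $R'_\mathfrak q$ from the freeness of the localized canonical module $\omega'_\mathfrak q$ rather than from the fiber. The no-free-summand transfer, though elementary, is the other place requiring care, and it is precisely faithful flatness, via $\tau_R(M)R'\cap R=\tau_R(M)$, that upgrades the easy implication into the full biconditional needed for both directions of the corollary.
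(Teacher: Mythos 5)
Your proposal is correct, and its skeleton coincides with the paper's: both verify that $R'$ inherits the hypotheses and that $\omega\otimes_R R'$ is a canonical module of $R'$, identify $E'=E\otimes_R R'$ and $\Hom_{R'}(E',\omega\otimes_R R')=E^\dag\otimes_R R'$, and then apply Lemma \ref{5} on both sides to reduce everything to the single equivalence $E^\dag\in\ocm'(R)\iff E^\dag\otimes_R R'\in\ocm'(R')$. The difference lies in how that last descent is carried out. The paper invokes Lemma \ref{03} to rewrite membership in $\ocm$ as the vanishing of $\Ext^1_R(\tr(E^\dag),R)$ and $\Ext^2_R(\tr(E^\dag),R)$, and then uses that the Auslander transpose commutes with the faithfully flat base change; you instead use the identification $\ocm'(R)=\Ref(R)\cap\cm'(R)$ and descend the two conditions separately, reflexivity via base change of the biduality map and the absence of a free summand via trace ideals and $\tau_R(M)R'\cap R=\tau_R(M)$. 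The two mechanisms are equivalent in substance (Lemma \ref{01}(1) identifies $\Ref$ with $\mathcal{F}_2$), but your version is in one respect more complete: the paper's proof only descends the $\ocm$-condition and is silent about the ``no free summand'' half of $\ocm'$, which your trace-ideal argument handles explicitly. Your direct verification that $R'$ is generically Gorenstein (reading Gorensteinness of $R'_{\mathfrak q}$ off the freeness of $\omega'_{\mathfrak q}$ at minimal primes) also replaces the paper's citation of \cite[Proposition 2.12]{GMP} for $\widehat{R'}$; either route suffices since Lemma \ref{5} only requires a canonical module for $R'$.
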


\begin{proof}
Let $E'=\End_{R'}(\mathfrak{m}')$. Note that $\widehat{R'}$ is also generically Gorenstein by \cite[Proposition 2.12]{GMP}. In addition, $\omega\otimes_R R'$ is a canonical module of $R'$. Therefore, by Lemma \ref{5}, $\cm(E)=\ocm'(R)$ if and only if $E^\dag \in \ocm'(R)$, and $\cm(E')=\ocm'(R')$ if and only if $\Hom_{R'}(E',\omega\otimes_R R') \in \ocm'(R)$. Here $E'=E\otimes_R R'$ and hence $\Hom_{R'}(E',\omega\otimes_R R')=(E^\dag)\otimes_R R'$. Lemma \ref{03} implies that $E^\dag \in \ocm'(R)$ if and only if $\Ext^1_R(\tr (E^\dag),R)=0=\Ext^2_R(\tr (E^\dag),R)$. The later condition is equivalent to the equations $\Ext^1_{R'}(\tr ((E^\dag)\otimes_R R'),R')=0=\Ext^2_{R'}(\tr (E^\dag)\otimes_R R'),R')$ since $R\to R'$ is faithfully flat and the Auslander transpose is preserved under a base change.
\end{proof}

Using the above lemma, we can replace $R$ with the completion $\widehat{R}$.
We have one more equivalent condition to being $\cm(E)=\ocm'(R)$.

\begin{lem} \label{8}
Assume that the completion $\widehat{R}$ of $R$ is generically Gorenstein. Then $E^\dag \in \ocm'(R)$ if and only if $E^\dag \cong \mathfrak{m}$.
\end{lem}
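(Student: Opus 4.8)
The ``if'' direction is immediate: if $E^\dag\cong\mathfrak m$, then $E^\dag\in\ocm'(R)$ by Lemma \ref{1a}. So the content is the ``only if'' direction, and the plan is to produce an (a priori non-canonical) isomorphism $E^\dag\cong\mathfrak m$ out of the reflexivity hypothesis. First I would record two structural identities. Since $R$ is singular, $\mathfrak m$ is not principal, so $E=(R:_{Q(R)}\mathfrak m)=(\mathfrak m:_{Q(R)}\mathfrak m)$, and hence $\mathfrak m=(R:_{Q(R)}E)$ is the conductor of $R\subseteq E$. Reading this as a Hom and using that $\mathfrak m$ is reflexive (Lemmas \ref{03} and \ref{1a}), it gives $\Hom_R(E,R)=(R:E)=(R:(R:\mathfrak m))=\mathfrak m$, and dually $\Hom_R(\mathfrak m,R)=E$; thus $\mathfrak m$ and $E$ are $R$-duals of one another. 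Secondly, by the standard formula for the canonical module along the finite extension $R\subseteq E$, one has $E^\dag=\Hom_R(E,\omega)\cong\omega_E$, the canonical module of $E$, which is in particular an $E$-module.

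Next I would reduce to the complete case. Exactly as in Corollary \ref{6}, the properties ``$E^\dag\in\Ref(R)$'' and ``$E^\dag$ has no free summand'' are detected after the faithfully flat base change $R\to\widehat R$, and an isomorphism $E^\dag\cong\mathfrak m$ descends from $\widehat R$; since $\widehat E=\End_{\widehat R}(\mathfrak m\widehat R)$, it suffices to treat $\widehat R$. The advantage of completeness is that $E$ then becomes a finite product of complete local rings, so every rank-one locally free $E$-module is free. Now I would translate the hypothesis into $E$-module language using the adjunction isomorphism $\Hom_R(M,R)\cong\Hom_E(M,\Hom_R(E,R))=\Hom_E(M,\mathfrak m)$, valid for every $E$-module $M$ precisely because $\Hom_R(E,R)=\mathfrak m$. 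Iterating this identifies the $R$-bidual of an $E$-module with its bidual with respect to the fractional ideal $\mathfrak m$ over $E$; in particular the hypothesis $E^\dag=\omega_E\in\Ref(R)$ becomes the statement that $\omega_E$ is reflexive with respect to $\mathfrak m$ over $E$, i.e. the natural map $\omega_E\to\Hom_E(\Hom_E(\omega_E,\mathfrak m),\mathfrak m)$ is an isomorphism.

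The final step is to deduce $\omega_E\cong\mathfrak m$ from this $\mathfrak m$-reflexivity. Here I would study the fractional $E$-ideal $\mathfrak a=\Hom_E(\omega_E,\mathfrak m)$ together with the evaluation map $\omega_E\otimes_E\mathfrak a\to\mathfrak m$, using that $\End_E(\omega_E)=E$ (endomorphisms of a canonical module) and $\End_E(\mathfrak m)=E$ (as $\mathfrak m$ is a faithful fractional $E$-ideal with $(\mathfrak m:_{Q(R)}\mathfrak m)=E$); the $\mathfrak m$-reflexivity of $\omega_E$ should force $\mathfrak a$ to be invertible, whence free by the vanishing of the Picard group, and then $\omega_E\cong\mathfrak m$ follows. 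This last step is the main obstacle. The difficulty is that reflexivity of $E^\dag$ and of $\mathfrak m$ alone does \emph{not} make the canonical inclusion $\mathfrak m\hookrightarrow E^\dag$ (which has finite-length cokernel, induced by $R\hookrightarrow\omega$) an isomorphism: dualizing identifies the induced map with the inclusion $(\mathfrak m:_{Q(R)}\omega_E)\hookrightarrow E$, which is in general proper. Thus the isomorphism $E^\dag\cong\mathfrak m$ is genuinely non-canonical, and extracting it is exactly where one must use both the canonical-module property $\End_E(\omega_E)=E$ and the triviality of the Picard group secured by completeness.
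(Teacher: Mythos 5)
Your ``if'' direction and the reduction to the complete case are fine, and the structural identities you record ($\Hom_R(E,R)=\mathfrak m$, $E^\dag\cong\omega_E$, the adjunction $\Hom_R(M,R)\cong\Hom_E(M,\mathfrak m)$ for $E$-modules $M$) are all correct. But the proof has a genuine gap exactly where you flag it: the passage from ``$\omega_E$ is $\mathfrak m$-reflexive over $E$'' to ``$\mathfrak a=\Hom_E(\omega_E,\mathfrak m)$ is invertible'' is asserted with a ``should'' and never argued, and nothing you have set up supplies it. The facts $\End_E(\omega_E)=E$ and $\End_E(\mathfrak m)=E$ only control endomorphism rings; they do not make the evaluation map $\omega_E\otimes_E\mathfrak a\to\mathfrak m$ surjective, which is what invertibility of a fractional ideal requires. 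In particular, your argument never invokes the defining homological property of the canonical module, namely $\Ext^1_{E_i}(N,\omega_{E_i})=0$ for Cohen--Macaulay $N$ over each local factor $E_i$ of $E$, and without some such input there is no mechanism forcing the rank-one module $\mathfrak a$ to be projective rather than merely reflexive.

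For comparison, the paper's proof avoids fractional-ideal manipulations entirely. It first uses Lemma \ref{7} (the ``cogenerator'' property of $\mathfrak m$) to embed $E^\dag$ into $\mathfrak m^{\oplus m}$ with Cohen--Macaulay cokernel $N$ lying in $\cm(E)$. Writing $E=E_1\times\cdots\times E_n$ with $E_i=\End_R(\mathfrak m_i)$ local (Krull--Schmidt over the complete ring $R$ applied to $\mathfrak m=\mathfrak m_1\oplus\cdots\oplus\mathfrak m_n$), one localizes at each maximal ideal $\mathfrak n_i$: there $(E^\dag)_{\mathfrak n_i}=\omega_{E_i}$, so the sequence splits because $\Ext^1_{E_i}(N_{\mathfrak n_i},\omega_{E_i})=0$, and $\omega_{E_i}$ is then a direct summand of $(\mathfrak m_i)_{\mathfrak n_i}^{\oplus m}$; indecomposability of both sides plus Krull--Schmidt gives $\omega_{E_i}\cong(\mathfrak m_i)_{\mathfrak n_i}$, and since the splitting holds at every $\mathfrak n_i$ the original injection splits globally, giving $E^\dag\cong\mathfrak m$. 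If you want to salvage your outline, the missing ingredient is precisely this local splitting argument; I would replace the invertibility claim for $\mathfrak a$ by the embedding from Lemma \ref{7} together with the $\Ext$-vanishing against $\omega_{E_i}$.
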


\begin{proof}
Thanks to Corollary \ref{6}, we can assume that $R$ is complete. If $E^\dag \cong \mathfrak{m}$, then we have $E^\dag \in \ocm'(R)$. Conversely, we assume $E^\dag\in\ocm'(R)$. Using Lemma \ref{7}, we get an exact sequence
\begin{equation} \label{9}
0 \to E^\dag \xrightarrow[]{\alpha} \mathfrak{m}^{\oplus m} \to N \to 0
\end{equation}
of modules in $\cm(E)$. By the Krull-Schmidt theorem on $R$, we have a unique decomposition $\mathfrak{m}=\mathfrak{m}_1\oplus\cdots\oplus\mathfrak{m}_n$, where $\mathfrak{m}_i$ are indecomposable $R$-modules. Then we obtain $E=\End_R(\mathfrak{m}_1)\times\cdots\times\End_R(\mathfrak{m}_n)$ as an $R$-algebra. The components $E_i=\End_R(\mathfrak{m}_i)$ of $E$ are local rings because of the indecomposablity of $\mathfrak{m}_i$. Set $\mathfrak{n}_i$ the maximal ideal of $E$ corresponding to the maximal ideal of $E_i$. Note that the localization $(E^\dag)_{\mathfrak{n}_i}=(\Hom_R(E,\omega))_{\mathfrak{n}_i}$ is the canonical module of $E_i$ and the localization $\mathfrak{m}_{\mathfrak{n}_i}$ is equal to $(\mathfrak{m}_i)_{\mathfrak{n}_i}$. Thus, after localizing at $\mathfrak{n}_i$, the sequence (\ref{9}) becomes split exact and $(E^\dag)_{\mathfrak{n}_i}$ is a direct summand of $(\mathfrak{m}_i)_{\mathfrak{n}_i}^{\oplus m}$. The modules $(E^\dag)_{\mathfrak{n}_i}$ and $(\mathfrak{m}_i)_{\mathfrak{n}_i}$ are both indecomposable. Hence we obtain an isomorphism $(E^\dag)_{\mathfrak{n}_i}\cong (\mathfrak{m}_i)_{\mathfrak{n}_i}$ by the Krull-Schmidt theorem. The homomorphism $\alpha$ is a split injection, since it becomes a split injection after localizing at $\mathfrak{n}_i$ for all $i=1,\dots,n$. Therefore $E^\dag$ is isomorphic a direct summand of $\mathfrak{m}^{\oplus m}$. Set $E^\dag\cong \mathfrak{m}_1^{\oplus a_1}\oplus\cdots\oplus \mathfrak{m}_n^{\oplus a_n}$. Then the localization at $\mathfrak{n}_i$ shows that $a_i=1$. Consequencely, $E^\dag\cong \mathfrak{m}_1\oplus\cdots\oplus\mathfrak{m}_n=\mathfrak{m}$.
\end{proof}

The following lemma will be used to prove Theorem \ref{1} {\rm(3)}.
\begin{lem} \label{11}
Let $A$ be a ring with total quotient ring $T$, $\overline{A}$ be the integral closure of $A$ in $T$, and $X$ be an $A$-submodule of $\overline{A}$ containing $A$. If there is an isomorphism $\phi:A \to X$ of $A$-modules, then $X=A$. 
\end{lem}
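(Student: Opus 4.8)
The plan is to use that an $A$-linear map out of $A$ is nothing but multiplication by a single element, and then to force that element into $A$ by combining its invertibility with its integral dependence over $A$.

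First I would set $u=\phi(1)\in X\subseteq\overline{A}$. Since $\phi$ is $A$-linear and $A$ is commutative, $\phi(a)=a\phi(1)=ua$ for all $a\in A$, so $\phi$ is simply multiplication by $u$; by surjectivity its image is $X=\phi(A)=uA$. (Note that injectivity of $\phi$ is not actually needed.) Next, from $1\in A\subseteq X=uA$ I obtain an element $b\in A$ with $ub=1$. Read inside the total quotient ring $T$, this says that $u$ is a unit of $T$ whose inverse $b$ already lies in $A$.

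The decisive step is to invoke $u\in\overline{A}$, i.e. that $u$ is integral over $A$: there is a monic relation $u^{n}+c_{n-1}u^{n-1}+\cdots+c_{1}u+c_{0}=0$ with all $c_i\in A$. Multiplying through by $b^{\,n-1}=u^{-(n-1)}$ and solving for $u$ gives
\[
u=-\bigl(c_{n-1}+c_{n-2}b+\cdots+c_{0}b^{\,n-1}\bigr),
\]
which visibly lies in $A$. Hence $u\in A$, and together with $ub=1$ this makes $u$ a unit of $A$; therefore $X=uA=A$, which is the desired conclusion.

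I expect no genuine obstacle here: once the isomorphism is recognized as multiplication by $u=\phi(1)$ and $u$ is seen to be invertible with inverse lying in $A$, the monic integral relation for $u$ finishes the argument immediately. The only points requiring mild care are that the manipulation of the integral equation legitimately takes place in $T$, where $u^{-1}$ exists, and that it is the containment $A\subseteq X$ (rather than the mere existence of the isomorphism $\phi$) which supplies the crucial inverse $b\in A$.
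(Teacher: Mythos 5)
Your proof is correct and follows essentially the same route as the paper: both identify the isomorphism with multiplication by an element of $\overline{A}$ whose inverse lies in $A$ (you work with $u=\phi(1)$, the paper with its inverse $r$ coming from $\phi^{-1}$ composed with the inclusion), and both clear denominators in the monic integral relation to conclude that this element is a unit of $A$. Your observation that injectivity of $\phi$ is not needed is a small, correct refinement, but the argument is the same.
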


\begin{proof}
Let $i:A\to X$ be the inclution homomorphism. Then $\phi^{-1}\circ i:A \to A$ is an endmorphism of $A$. Hence it is a multiplication map by $r$ for some $r\in A$. Since $r=\phi^{-1}\circ i:A\to A$ is injective, $1/r$ is in $T$. We have $1=i(1)=\phi(r)=r\phi(1)$ in $\overline{A}$ and hence $1/r=\phi(1)\in \overline{A}$. It means that $1/r$ is integral over $A$. Therefore we have an equation of integral dependence
\[
(1/r)^n+a_1(1/r)^{n-1}+\cdots+a_n=0,
\]
where $a_i\in A$ for all $i=1,\dots,n$. Multiplying $r^n$, we get
$
1+r(a_1+\cdots+a_nr^{n-1})=0.
$
This equation yields that $r$ is a unit of $A$. Thus the endmorphism $r=\phi^{-1}\circ i:A\to A$ is an automorphism, $i$ is an isomorphism, and $A=X$.
\end{proof}

Assume that $R$ is complete and has a inifite residue field. Then there is an $R$-submodule $K$ of $Q(R)$ such that $R\subset K \subset \overline{R}$, and as an $R$-module, $K$ is a canonical module of $R$; see \cite[Corollary 2.9]{GMP}. Using this module $K$, we have the following theorem, which essensially proves 
Theorem \ref{1} (3).

\begin{thm} \label{th}
Assume that $R$ is complete and the residue field $R/\mathfrak{m}$ is infinite. Let $K$ be an $R$-submodule of $Q(R)$ such that $R\subset K \subset \overline{R}$, and as an $R$-module, $K$ is a canonical module of $R$. Then the following conditions are equivalent.
\begin{enumerate}[\rm(1)]
\item
$R$ is almost Gorenstein.
\item
$K:\mathfrak{m}=E$.
\item
$\mathfrak{m}:K=\mathfrak{m}$.
\item
$K:E$ is isomorphic to $\mathfrak{m}$.
\item
$K:E$ is in $\ocm'(R)$.
\item
$\ocm'(R)=\cm(E)$.
\end{enumerate}
\end{thm}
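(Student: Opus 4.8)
The plan is to split the six conditions into two clusters and bridge them. The conditions (4), (5), (6) are already tied together by the lemmas above: since $K:E=\Hom_R(E,\omega)=E^\dag$, Lemma \ref{8} gives (4)$\iff$(5) and Lemma \ref{5} gives (5)$\iff$(6). Hence the real content is to connect the "fractional-ideal" conditions (1), (2), (3) to this cluster. By Corollary \ref{6} I first reduce to the case $R$ complete, so that Lemma \ref{8}, Lemma \ref{11} and Krull--Schmidt are available and $K$ sits as $R\subset K\subset\overline{R}$.

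I work throughout with fractional ideals inside $Q(R)$ and the canonical dual $\dag=\Hom_R(-,\omega)=(K:-)$. First I record the standing identities: $\End_R(K)=K:K=R$; reflexivity of $\mathfrak{m}$ gives $K:(K:\mathfrak{m})=\mathfrak{m}$; and the inclusions $\mathfrak{m}\subseteq E$ and $E\subseteq K:\mathfrak{m}$ (because $\mathfrak{m}\subseteq K$), together with their duals $\mathfrak{m}\subseteq K:E\subseteq K\subseteq\overline{R}$. From $0\to\mathfrak{m}\to R\to k\to 0$ and $\Ext_R^1(k,K)\cong k$ I obtain $\length((K:\mathfrak{m})/K)=1$. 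The pivotal observation is
\[
R\text{ is almost Gorenstein}\iff \mathfrak{m}K\subseteq R\iff K\subseteq E .
\]
The first equivalence is the definition (in dimension one the cokernel of $R\hookrightarrow K$ is killed by $\mathfrak{m}$); for the second, $K\subseteq E$ trivially forces $\mathfrak{m}K\subseteq\mathfrak{m}\subseteq R$, while conversely, if some product $m\kappa$ ($m\in\mathfrak{m}$, $\kappa\in K$) were a unit, then $m$ would become invertible in $\overline{R}$, which is impossible by the integral-dependence argument underlying Lemma \ref{11}; hence $\mathfrak{m}K\subseteq\mathfrak{m}$, that is $K\subseteq E$.

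With this the equivalences among (1), (2), (3) become colon calculus. Since $\mathfrak{m}:K\subseteq\mathfrak{m}$ always, and $\mathfrak{m}\subseteq\mathfrak{m}:K\iff\mathfrak{m}K\subseteq\mathfrak{m}\iff K\subseteq E$, condition (3) is exactly $K\subseteq E$, i.e.\ (1). For (2), I use $K\subseteq K:\mathfrak{m}$ and $E\subseteq K:\mathfrak{m}$ with $\length((K:\mathfrak{m})/K)=1$: if $R$ is almost Gorenstein then $K\subseteq E\subseteq K:\mathfrak{m}$ forces $E=K:\mathfrak{m}$ (as $E\neq K$ when $R$ is non-Gorenstein; the Gorenstein case is direct), and conversely $K:\mathfrak{m}=E$ gives $K\subseteq E$. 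Finally (2)$\Rightarrow$(4) follows by applying $K:(-)$ to $K:\mathfrak{m}=E$ and invoking reflexivity to get $K:E=\mathfrak{m}$, hence $K:E\cong\mathfrak{m}$.

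It remains to close the loop, and this is the step I expect to be the main obstacle: promoting the \emph{isomorphism} in (4) to the \emph{equality} required by (2). Dualizing (4) by $\dag$ and using $(K:E)^\dag=E$ yields an $E$-linear isomorphism $E\cong K:\mathfrak{m}$ lying over the inclusion $E\subseteq K:\mathfrak{m}$. I then intend to apply Lemma \ref{11} with $A=E$ and $X=K:\mathfrak{m}$ (noting $\overline{E}=\overline{R}$) to conclude $K:\mathfrak{m}=E$. The delicate point is checking the hypothesis $X\subseteq\overline{R}$: when $E\not\subseteq K$ the colength-one computation gives $K:\mathfrak{m}=E+K\subseteq\overline{R}$ outright, and the degenerate possibility $E\subseteq K$ has to be excluded (or treated directly) using completeness and the length bookkeeping above. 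Once $K:\mathfrak{m}=E$ is in hand, condition (2) holds, and the cycle (1)$\Rightarrow$(2)$\Rightarrow$(4)$\Leftrightarrow$(5)$\Leftrightarrow$(6)$\Rightarrow$(1), together with (1)$\Leftrightarrow$(3), finishes the proof.
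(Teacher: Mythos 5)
Your architecture coincides with the paper's: reduce to the complete case via Corollary \ref{6}, settle (4)$\Leftrightarrow$(5)$\Leftrightarrow$(6) by Lemmas \ref{8} and \ref{5} using $K:E=E^\dag$, run a cycle through (1)--(4), and invoke Lemma \ref{11} for the hard implication. Your treatment of (1), (2), (3) is correct and a bit more self-contained than the paper's: identifying both (1) and (3) with the containment $K\subseteq E$, and deducing (2) from (1) via $K\subseteq E\subseteq K:\mathfrak{m}$, $\length\bigl((K:\mathfrak{m})/K\bigr)=1$ and $K:K=R$ (so $E=K$ forces $K=R$), replaces the paper's citation of \cite[Theorem 3.16]{GMP}. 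One caveat: ``$R$ almost Gorenstein $\iff\mathfrak{m}K\subseteq R$'' is not the definition but the content of \cite[Theorem 3.11]{GMP}; since the paper cites the same theorem for (3)$\Rightarrow$(1), this is a fair starting point.

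The genuine gap is exactly where you flag it, in (4)$\Rightarrow$(2), and it is not closable by the means you propose. Lemma \ref{11} with $A=E$, $X=K:\mathfrak{m}$ requires $K:\mathfrak{m}\subseteq\overline{R}$, and your argument delivers this only when $E\not\subseteq K$ (via $K:\mathfrak{m}=E+K$). The case $E\subseteq K$ is not a degeneracy that length bookkeeping removes: there $1\in K:E$, so the isomorphism of (4) writes $K:E=q\mathfrak{m}$ with $m_0:=1/q\in\mathfrak{m}$, and dualizing gives $E=K:(K:E)=m_0(K:\mathfrak{m})$, i.e.\ $K:\mathfrak{m}=m_0^{-1}E\ni m_0^{-1}$; if this were contained in $\overline{R}$ then $1/m_0\in\overline{R}$ and the integral-dependence argument of Lemma \ref{11} would make $m_0$ a unit of $R$, a contradiction. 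So in that configuration $K:\mathfrak{m}\not\subseteq\overline{R}$ and Lemma \ref{11} simply does not apply; what you would actually need to show is that the configuration cannot occur, which is the same difficulty in disguise. The paper closes the hole with an external input: \cite[Corollary 3.8]{GMP} gives $K:\mathfrak{m}\subseteq R[K]\subseteq\overline{R}$ whenever $R$ is not Gorenstein, after which $E\subseteq K:\mathfrak{m}$ and $K:\mathfrak{m}\cong K:(K:E)=E$ let Lemma \ref{11} conclude $K:\mathfrak{m}=E$. Without that containment or an equivalent substitute, your proof of (4)$\Rightarrow$(2) is incomplete.
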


\begin{proof}
(1) $\implies$ (2): If $R$ is Gorenstein, then there is nothing to prove. Thus we assume that $R$ is  almost Gorenstein but not Gorenstein. In this case, \cite[Theorem 3.16]{GMP} says that $(E=)\mathfrak{m}:\mathfrak{m}=K:\mathfrak{m}$.

(2) $\implies$ (3): We can easely see $\mathfrak{m}=K:(K:\mathfrak{m})=K:E$. Thus $\mathfrak{m}:K=(K:E):K=(K:K):E=R:E=\mathfrak{m}$.

(3) $\implies$ (1): We have $\mathfrak{m}K\subset \mathfrak{m}\subset R$. This implies that $R$ is almost Gorenstein by \cite[Theorem 3.11]{GMP}.

(2) $\implies$ (4): We have already got $\mathfrak{m}=K:E$. In particular, $K:\mathfrak{m}$ is isomorphic to $\mathfrak{m}$.

(4) $\implies$ (2): We assume that $R$ is not Gorenstein. Then $K:\mathfrak{m}\subset R[K]\subset \overline{R}$ by \cite[Corollary 3.8]{GMP}. On the other hand, $K:\mathfrak{m}$ is isomorphic to $(K:K):E=E$ by the assumption. Applying Lemma \ref{11} to $X=K:\mathfrak{m}$ and $A=E$, we obtain $K:\mathfrak{m}=E$.

(4) $\iff$ (5) and (5) $\iff$ (6): These follow by Lemmas \ref{8} and \ref{5}.
\end{proof}

\begin{proof}[Proof of Theorem \ref{1} {\rm (3)}]
By Corollary \ref{6}, we may assume that $R$ is complete and has infinite residue field. Thus the assersion follows by Theorem \ref{th}.
\end{proof}

We end the section by giving some examples of rings $R$ with $\ocm(R)$ is of finite type.

\begin{ex} \label{ex1}
Let $R=k[[t^3,t^7,t^{11}]]$. Then $E=k[[t^3,t^4]]$, which is Gorenstein. Therefore $\ocm(E)=\cm(E)$ and hence $\ocm(E)=\ocm'(R)=\cm(E)$ by Theorem \ref{1} (1). It follows that $R$ is almost Gorenstein and has minimal multiplicity by Theorem \ref{1} (2) and (3). Note that $\cm(E)$ is of finite type. This implies that $\ocm'(R)$ has finitely many indecomposable objects and thus $\ocm(R)$ is also of finite type.
\end{ex}

\begin{ex} \label{ex2}
Let $R=k[[t^3,t^7,t^8]]$. Then $E=k[[t^3,t^4,t^5]]$ and $\cm(E)$ is of finite type. This yields $\ocm(R)$ is also of finite type. On the other hand, $R$ is not almost Gorenstein.
\end{ex}

\section{The dimension of the stable category of $\cm(R)$}

In this section, as an application of our results obtained in the preceding sections, we consider the dimension of the stable category of Cohen--Macaulay $R$-modules in the case where $R$ is Gorenstein. We shall construct for each integer $n$ an example of a Gorenstein ring $R$ such that $\dim\underline{\cm}(R)=n$. 

Assume that $R$ is a complete domain. Then $E$ is a finite product of local rings because $R$ is henselian and $E$ is module-finite over $R$. Since $E$ is a subring of $\overline{R}$, it is also a domain. Thus $E$ is a complete local domain. Using this argument inductively, we obtain a family $\{(R_n,\mathfrak{m}_n)\}_n$ of local rings with
\begin{equation*}
R_n=\left\{
\begin{array}{ll}
E & \text{if }n=1,\\
\End_{R_{n-1}}(\mathfrak{m}_{n-1}) & \text{otherwise}.
\end{array}
\right.
\end{equation*}

If $R$ is Gorenstein, then as is well known, $\cm(R)$ is a Frobenius exact category. Thus its stable category $\underline{\cm}(R)$ is a triangulated category; see \cite[Chapter 1]{H} for details. In this case, we can consider the dimension of the triangulated category $\underline{\cm}(R)$ in the sense of Rouquier \cite{R}.

\begin{prop} \label{dim}
Assume that $R$ is a complete Gorenstein domain. If $\cm (R_n)$ is of finite type, then $\dim \underline{\cm} (R)\leq n-1$. 
\end{prop}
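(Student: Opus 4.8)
The plan is to exhibit a single object $G\in\cm(R)$ with $\underline{\cm}(R)=\langle G\rangle_n$, where $\langle G\rangle_j$ denotes the $j$-th thickening (ball) of $\add(G)$ inside the triangulated category $\underline{\cm}(R)$, in the sense of Rouquier \cite{R}. Since $\dim\mathcal{T}\le d$ precisely when $\mathcal{T}=\langle G\rangle_{d+1}$ for some $G$, producing such a $G$ gives $\dim\underline{\cm}(R)\le n-1$. Recall that in $\underline{\cm}(R)$ the shift is the cosyzygy, so $\syz M\cong M[-1]$, and that every object is isomorphic to a module without free summands (free summands vanish in the stable category). Hence it suffices to show that each $M\in\cm(R)$ lies in $\langle G\rangle_n$. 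To this end I would attach to $M$ the syzygy tower $M_0=M$ and $M_i=\syz_{R_{i-1}}M_{i-1}$, taken successively over $R_0=R,\,R_1,\dots,R_{n-1}$.

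First I would verify that this tower is well defined and remains inside $\cm(R)$. As recorded just before the statement, since $R$ is a complete domain each $R_i$ is a complete local domain, module-finite and Cohen--Macaulay over $R$, so each $R_i$ satisfies the standing hypotheses and Theorem~\ref{1}(1) applies to it. Starting from $M_{i-1}\in\cm(R_{i-1})$, Lemma~\ref{3} gives $M_i=\syz_{R_{i-1}}M_{i-1}\in\ocm'(R_{i-1})$, and Theorem~\ref{1}(1) applied to $R_{i-1}$ (whose associated endomorphism ring $\End_{R_{i-1}}(\mathfrak{m}_{i-1})$ is $R_i$) yields $\ocm'(R_{i-1})\subseteq\cm(R_i)$, so $M_i\in\cm(R_i)$. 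In particular every $M_i$ is Cohen--Macaulay over $R$, and the defining short exact sequences $0\to M_i\to R_{i-1}^{\oplus a_{i-1}}\to M_{i-1}\to0$ are sequences in $\cm(R)$; they therefore induce triangles $R_{i-1}^{\oplus a_{i-1}}\to M_{i-1}\to M_i[1]\to$ in $\underline{\cm}(R)$.

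Now set $G=R_1\oplus\cdots\oplus R_{n-1}\oplus G_n$, where $G_n$ is a fixed module with $\add(G_n)=\cm(R_n)$; this is exactly where the finite type of $\cm(R_n)$ is used, as it supplies one generator independent of $M$. Then $M_n\in\cm(R_n)=\add(G_n)\subseteq\langle G\rangle_1$. Descending the triangles above, using $\langle G\rangle_a*\langle G\rangle_b\subseteq\langle G\rangle_{a+b}$ together with $R_{i-1}^{\oplus a_{i-1}}\in\langle G\rangle_1$, I would show by downward induction that $M_{n-j}\in\langle G\rangle_{j+1}$, and in particular $M_1\in\langle G\rangle_n$. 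Finally, since $R_0=R$ is projective it is zero in $\underline{\cm}(R)$, so the first triangle degenerates to $M_0\cong M_1[1]$; as thickenings are shift-closed, this gives $M=M_0\in\langle G\rangle_n$. Since $M$ was arbitrary, $\underline{\cm}(R)=\langle G\rangle_n$, and the bound follows.

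The main obstacle is securing the additive bound $\langle G\rangle_n$ rather than the multiplicative $\langle G\rangle_{2^{\,n-1}}$ that a naive iterated substitution would produce. The point is to organize the syzygy triangles into one iterated-cone (octahedral) filtration of $M$ whose graded pieces are precisely the shifted $R_1,\dots,R_{n-1}$ and $M_n$, so that there are exactly $n$ steps, with the first step costing nothing because $R$ is projective. The remaining work is bookkeeping: checking Cohen--Macaulayness over $R$ at every level so that the whole tower lives in the single triangulated category $\underline{\cm}(R)$, and confirming that each $R_{i-1}$ genuinely satisfies the hypotheses required to invoke Theorem~\ref{1}(1).
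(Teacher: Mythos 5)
Your proposal is correct and follows essentially the same route as the paper's proof: the same generator $G=G'\oplus R_{n-1}\oplus\cdots\oplus R_1$, the same iterated syzygy tower over the successive rings $R_i$ justified by Lemma~\ref{3} and Theorem~\ref{1}(1), and the same linear (one cone per level) count giving $\langle G\rangle_n$. The only cosmetic difference is that you pass from $M$ to $M_1$ by taking a syzygy over $R$ and using that $R$ vanishes stably, whereas the paper strips free summands directly and uses $\cm'(R)=\ocm'(R)=\cm(R_1)$ for Gorenstein $R$; both are fine.
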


\begin{proof}
By the assumption, there is an $R_n$-module $G'$ such that $\cm(R_n)=\add (G')$. Let $G=G'\oplus R_{n-1}\oplus\cdots\oplus R_1$.

We claim that $\underline{\cm}(R)=\langle G\rangle_n$. In fact, let $M_1\in \cm(R)$. To show that $M_1\in \langle G \rangle_n$, we may assume that $M_1$ is in $\cm'(R)$. Since $R$ is Gorenstein, $\cm'(R)=\ocm'(R)$ and hence $\cm'(R)=\cm(R_1)$ by Theorem \ref{1} (1). Therefore $M_1\in \cm(R_1)$. Taking a minimal free resolution of $M_1$ over $R_1$, we have an exact sequence $0\to M_2 \to R_1^{\oplus} \to M \to 0$. Here $M_2$ is in $\ocm'(R_1)$ by Lemma \ref{3} and thus in $\cm(R_2)$ by Theorem \ref{1} (1). Applying this argument inductively, we obtain $R$-modules $M_i\in\cm(R_i)$ with exact sequences 
\begin{equation} \label{cl1}
0\to M_i \to R_{i-1}^{\oplus} \to M_{i-1} \to 0
\end{equation}
for each integer $i\geq 2$. Since $M_n\in \cm(R_n)$, it is obvious that $M_n\in \langle G\rangle_1$. Then induction shows that $M_i\in \langle G \rangle_{n-i+1}$ by considering the exact triangle in $\underline{\cm}(R)$ induced by the sequence \eqref{cl1} for each $i$. In particular, $M_1\in \langle G \rangle_n$. 

This claim yields that $\dim\underline{\cm}(R)\leq n-1$.
\end{proof}

\begin{ex}
Let $n\geq 1$ be an integer and $R=k[[H]]$ be the numerical semigroup ring with $H=\langle 2^{n+1},\{2^{n+1}+2^i\}_{i=1,\dots,n},2^{n+1}+3\rangle$. Then $R$ is a complete intersection of codimension $n+1$. Thus $\dim\underline{\cm}(R)\geq n$ by \cite[Corollary 5.10]{BIKO}. On the other hand, we can see that $R_{n+1}$ is isomorphic to $k[[t^2,t^3]]$. Note that $\cm(k[[t^2,t^3]])$ is of finite type. Therefore $\dim \underline{\cm}(R)\leq n$ by Proposition \ref{dim}. Consequently, one has $\dim \underline{\cm}(R)=n$.
\end{ex}

\begin{rem} \label{dim2}
Let $R$ be as in the above example. Since $R_{n+1}=k[[t^2,t^3]]$, $R_{n+2}$ is equal to $k[[t]]$, which is the integral closure of $R$ in its quotient field. Therefore, the endmorphism ring $\Gamma:=\End_R(R\oplus R_1\oplus\cdots R_{n+2})$ has global dimension less than or equal to $n+2$; see \cite[Example 2.2.3(2)]{I}and \cite[Theorem 4]{L}. It is known that $\dim \underline{\cm}(R)+2 \leq \mathop{\mathsf{gl.dim}}\nolimits \Gamma$. This shows that $\dim \underline{\cm}(R) \leq n$.
\end{rem}

\begin{ac}
The author is grateful to his supervisor Ryo Takahashi for giving him helpful advice throughout this paper. The author also thanks to Osamu Iyama for providing helpful suggestions on Theorem \ref{1} and Remark \ref{dim2}.
\end{ac}

%%%%%%%%%%%%%%%%%%%%%%%%%%%%%%%%%%%%%%%%%%%%%%%%%%%%%%%%%%%%%%%%%%%%%%%%%%%%%%%%%%%%%%%%%%%%%%%%%%%%%%%%%%%%%%%%%%

\end{document}